\journal{International Journal of Approximate Reasoning}
\definecolor{salmon}{RGB}{250,128,114}
\definecolor{apricot}{RGB}{128,128,0}
\definecolor{apricot}{RGB}{230,170,110}
\definecolor{pastelblue}{RGB}{80,120,180}
\pgfplotsset{compat=1.18}
\newcommand{\D}{\mathcal D}
\newcommand{\dd}{\mathrm{d}}
\newcommand{\E}{\mathbb E}
\newcommand{\Ecb}{\mathcal E^{\mathrm{cb}}}
\newcommand{\Ecbinf}{\mathcal E^{\mathrm{cb},\infty}}
\newcommand{\EcbT}{\mathcal E^{\mathrm{cb},T}}
\newcommand{\Ee}{\mathcal E}
\newcommand{\EH}{\mathcal E^{\mathrm{Hoeff}}}
\newcommand{\EHa}{E^{\mathrm{H}}_\alpha}
\newcommand{\F}{\mathcal F}
\newcommand{\Hh}{\mathcal H}
\newcommand{\Nn}{\mathbb N}
\newcommand{\one}[1]{\mathbf{1}_{#1}}
\newcommand{\Pp}{\mathbb P}
\newcommand{\PR}{\mathcal P}
\newcommand{\R}{\mathbb R}
\newcommand{\supp}{\mathrm{Supp}}
\newcommand{\T}{\mathcal T}
\newcommand{\X}{\mathcal X}
\newcommand{\Y}{\mathcal Y}
\newcommand{\Z}{\mathcal Z}
\newtheorem{definition}{Definition}
\newtheorem{lemma}{Lemma}
\newtheorem{proposition}{Proposition}
\newtheorem{theorem}{Theorem}
\newproof{proof}{Proof}
\begin{document}

\bibliographystyle{elsarticle-harv} 

\begin{frontmatter}
    \title{On the optimality of coin-betting for mean estimation} 
    \author{Eugenio Clerico}
    \ead{eugenio.clerico@gmail.com}
    \address{Universitat Pompeu Fabra, Barcelona, Spain}
    \begin{abstract}  
        We consider the problem of testing the mean of a bounded real random variable. We introduce a notion of optimal classes for e-variables and e-processes, and establish the optimality of the coin-betting formulation among e-variable-based algorithmic frameworks for testing and estimating the (conditional) mean. As a consequence, we provide a direct and explicit characterisation of all valid e-variables and e-processes for this testing problem. In the language of classical statistical decision theory, we fully describe the set of all admissible e-variables and e-processes, and identify the corresponding minimal complete class.
    \end{abstract}
    \begin{keyword}
        sequential hypothesis testing \sep e-variables \sep e-processes \sep mean estimation  \sep admissibility. 
    \end{keyword}
\end{frontmatter}

\section{Introduction}\label{sec:intro}
Estimating the mean of a random variable from empirical observations is a classical problem in statistics. To account for uncertainty, a widely used approach consists in constructing a confidence set, known to contain the true mean with high probability, rather than relying solely on a point estimate. When the data are observed sequentially, one might want to update this set as new data-points become available. However, such procedure may compromise the validity of the statistical guarantee, if this was designed for a fixed sample size. To address this issue, \citet{darling67confidence} introduced the concept of \emph{confidence sequence}, a data-adaptive sequence of confidence sets whose intersection contains the desired mean with high probability.

\cite{orabona2023tight} and \cite{waudbysmith23estimating} have recently explored \emph{algorithmic} approaches that yield some of the tightest confidence sequences for  the mean of a bounded real random variable. Both papers propose setting up a series of sequential \emph{coin-betting} games, one per each mean candidate value $\mu$, where a player sequentially bets on the difference between $\mu$ and the upcoming observation. If $\mu$ matches the true mean, the game is fair, and substantial gains unlikely. A confidence sequence is  obtained by excluding those values  $\mu$ that allowed the player to accumulate significant wealth. 

This coin-betting approach to mean estimation is a particular instance of a broader algorithmic framework for constructing confidence sequences through \emph{sequential hypothesis testing}, which can be framed in terms of betting games where at each round the player has to select an \emph{e-variable} \citep{shafer2021testing, ramdas2022admissible, ramdas2023game}. E-variables, non-negative random variables whose expectation is bounded by $1$ under the tested hypothesis \citep{grunwald2024safe}, have recently emerged as a powerful and increasingly popular tool for anytime-valid hypothesis testing. By serving as building blocks for constructing non-negative super-martingales, which  can be seen as representing the wealth of a player in a \emph{betting game}, e-variables naturally lend themselves to game-theoretic interpretations \citep{shafer2019game, ramdas2024hypothesis}. 

The main goal of this work is to illustrate and formalise that, when \emph{sequentially} testing and estimating the (conditional) mean of a bounded real random variable, no e-variable procedure yields strictly better guarantees than the coin-betting approach. In a sense to be clarified later, coin-betting is \emph{optimal}, as it represents the ``simplest'' formulation among those that cannot be strictly performed by any other such testing-by-betting approach. One main novelty of this work is the introduction of a notion of optimality at the level of \emph{sets} of e-variables. This perspective differs from much of the existing literature, primarily focused on the optimality of an individual e-variable, or wealth process, in the betting game (e.g., \emph{log-optimality} in \citealt{koolen2022log,grunwald2024safe, larsson2024numeraire}, or \emph{admissibility} in \citealt{ramdas2022admissible}). We remark that the perspective adopted in this work can be seen as an adaptation, to the setting of e-variables, of the classical statistical problem of identifying a minimal complete class of tests (see \citealp{lehmann2005testing}). Further discussion of this connection is deferred to \Cref{sec:perspectives}.

The first part of this work focuses on  round-wise \emph{testing-by-betting}, a scenario where the player iteratively picks a \emph{single-round} e-variable, whose choice may depend on the past observations. This procedure naturally applies to the setting where the observations are known to be independently drawn from a fixed probability distribution, whose mean has to be estimated. However, we remark that testing via sequential betting with single-round e-variables is not the most general form of sequential testing with e-variables, which typically relies on \emph{multi-round} e-variables and \emph{e-processes} \citep{shafer2021testing, koolen2022log, ramdas22testing, ramdas2024hypothesis}. These tools allow for testing hypotheses over the entire data sequence, including assumptions about the dependence structure (e.g., i.i.d.~or fixed conditional mean). In such cases, restricting the player to select a single-round e-variable at each step may be highly limiting \citep{koolen2022log, ramdas22testing}. The second part of this work considers this broader setting. We establish that when the sequence has fixed conditional mean, coin-betting remains optimal even among testing methods based on multi-round e-variables and e-processes. However, we also show that this optimality result no longer holds under the more restrictive hypothesis of i.i.d.~observations.

It is worth noting that an alternative way to frame the main contribution of this work is as a concrete and direct characterisation of the family of e-variables and e-processes for testing the (conditional) mean of a real bounded random variable. More precisely, the e-variables and e-processes for these tests are exactly the non-negative measurable functions or processes that are majorised by a coin-betting e-variable or e-process. Since the coin-betting formulation provides a very explicit expression for these objects, our results directly yield a fully explicit description of the full set of e-variables and e-processes for the problem at hand. Following  a first pre-print of this manuscript, general characterisations for the e-variables when testing hypotheses defined by linear constraints were established by \cite{clerico24optimal} and \cite{larsson2025evariables}. These results directly imply ours  for single-round e-variables (\Cref{thm:main}), which is also explicitly discussed by these works as an application. However, the approach we present here is direct and tailored to the simple setting considered, making it valuable for building intuition, while the more general results are significantly more abstract. Moreover, the two aforementioned works focus exclusively on single-round e-variables, whereas our contribution provides a complete and  explicit characterisation of both e-variables and e-processes in the sequential  conditional mean testing problem. For further discussion on these works, see \Cref{sec:perspectives}.

\subsection*{Notation}
We endow any Borel set $\Z\subseteq\R^d$ with the standard topology, and we denote as $\PR_\Z$ the set of Borel probability measures on $\Z$. For $z\in\Z$, $\delta_z$ is the Dirac unit mass on $z$. For $P\in\PR_\Z$ and a Borel measurable function $f$ on $\Z$, $\E_P[f(Z)]$ (or more compactly $\E_P[f]$) denotes the expectation of $f$ under $Z\sim P$. Given a sub-sigma-field $\mathcal G$, $\E_P[Z|\mathcal G]$ is the conditional expectation. We will be interested in the case of $\mathcal G$ being the sigma-field generated by some random variable $X$. In such case, we write $\E_P[Z|X]$. 

We  also consider measures on product spaces. For $T\geq 1$, we write $\PR_{\Z^T}$ for the set of Borel probability measures on $\Z^T$ (endowed with the product topology). We will use $\otimes$ to denote the direct product of measures. For instance, given $P$ and $Q$ in $\PR_\Z$, $P\otimes Q$ will be the element of $\PR_{\Z^2}$ that encodes the law of $(Z_1,Z_2)$, where $Z_1\sim P$ and $Z_2\sim Q$ are independent. 

For any two given integers $s$ and $t$, with $s\leq t$, $[s:t]$ denotes the set of integers between $s$ and $t$ (both included). For an integer $T$, given a vector $(z_1, \dots, z_T)$, we often represent it compactly as $z^T$ (upper indices). At times, we will also use the notation $z^{t:T}$ (with $t\in[1:T]$), to denote the vector $(z_t, z_{t+1}, \dots, z_T)$. Sequences are denoted as $(s_t)_{t\geq T_0}$, with $t$ an integer index and $T_0$ its smallest value (typically $0$ or $1$). Sometimes we will also use the notation $s^{T_0:\infty}$ to denote $(s_t)_{t\geq T_0}$, or simply $s^\infty$ if $T_0$ is clear from the context. For high probability statements, $\Pp$ expresses probability with respect to all the randomness involved. For instance, if $(Z_t)_{t\geq 1}$ is a sequence of i.i.d.\ draws from  $P\in\PR_\Z$, we may write $\Pp(Z_t\geq1/2\,,\,\forall t\geq 1)$, with obvious meaning.

\section{Algorithmic mean testing via single-round e-variables}\label{sec:ame}
We start by presenting a framework for sequential hypothesis testing, formalised as a betting game. We then specialise to testing the mean of a bounded distribution, introducing the coin-betting approach.

\subsection{Sequential testing game}
Let $\Z$ be a non-empty Borel set in $\R^d$. A \emph{hypothesis} on $\Z$ is a non-empty subset $\Hh$ of $\PR_\Z$, and an \emph{e-variable} (for $\Hh$) is a non-negative Borel measurable function $E:\Z\to[0,+\infty)$, such that
$$\E_P[ E] \leq 1\,,\qquad\forall P\in\Hh\,.$$
We denote as $\Ee_\Hh$ the set of all the e-variables with respect to $\Hh$, and we call \emph{e-class} any subset of $\Ee_\Hh$. We remark that $\Ee_\Hh$ is never empty, since the constant function $1$ is always an e-variable, for any $\Hh$.

\begin{definition}[Testing-by-betting game]\label{def:game}
    Fix a hypothesis $\Hh\subseteq\PR_{\Z}$ and a non-empty e-class $\Ee\subseteq\Ee_\Hh$. An \emph{$\Ee$-restricted testing-by-betting game} (on $\Hh$) is the following sequential procedure. Each round $t\geq 1$, 
    \begin{itemize}\setlength{\itemsep}{0pt}
        \item the player picks\footnote{We assume that $E_t$ is picked in a measurable fashion, namely the mapping $(z_1,\dots z_t)\mapsto E_t(z_t)$ is Borel measurable.} an e-variable $E_t\in\Ee$ based solely on the past observations $z_1,\dots, z_{t-1}$;
        \item the player observes a new data-point $z_t\in\Z$;
        \item the player earns a reward $\log E_t(z_t)$. 
    \end{itemize}
    If $\Ee=\Ee_\Hh$, we speak of \emph{unrestricted} testing-by-betting game.
\end{definition}
The above game is an instance of e-variable testing, where one designs a test that rejects the hypothesis $\Hh$ whenever the total reward earned by the player gets excessively high. This procedure is justified by the fact that, if the data-points observed during the game were independently drawn from  $P \in \Hh$, then the cumulative reward would be unlikely to grow very large. This is formalised by the following proposition.
\begin{proposition}\label{lemma:game}
    Let $\Hh\subseteq\PR_{\Z}$ and consider a sequence $(Z_t)_{t\geq 1}$ of independent draws from  $P\in\Hh$. Fix $\delta\in(0,1)$ and $\Ee \subseteq \Ee_\Hh$. Consider an $\Ee$-restricted testing-by-betting game, where the observations are the sequence $(Z_t)_{t\geq 1}$. Let $R_n=\sum_{t=1}^n\log E_t(Z_t)$ represent the player's cumulative reward at round $n$. Then,
    $$\Pp\big(R_n\leq\log\tfrac{1}{\delta}\,,\;\forall n\geq 1\big)\geq 1-\delta\,.$$
\end{proposition}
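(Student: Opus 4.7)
The plan is to introduce the exponentiated reward (the wealth process) $W_n = \exp(R_n) = \prod_{t=1}^n E_t(Z_t)$ with $W_0 = 1$, show that it is a non-negative supermartingale with respect to the canonical filtration $\F_n = \sigma(Z_1,\dots,Z_n)$ under $P^{\otimes \infty}$, and then invoke Ville's maximal inequality for non-negative supermartingales.

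The first concrete step is to verify the supermartingale property. By \Cref{def:game}, $E_t$ is a measurable function of $z_1,\dots,z_{t-1}$, so $E_t$ is $\F_{t-1}$-measurable as a random e-variable, and $W_{t-1}$ is $\F_{t-1}$-measurable. Conditioning on $\F_{t-1}$ and using that $Z_t$ is independent of $\F_{t-1}$ with law $P \in \Hh$, I would write
\begin{equation*}
    \E_P\bigl[W_t \,\bigm|\, \F_{t-1}\bigr]
    = W_{t-1}\,\E_P\bigl[E_t(Z_t)\,\bigm|\,\F_{t-1}\bigr]
    = W_{t-1}\,\E_P\bigl[E_t\bigr]
    \leq W_{t-1}\,,
\end{equation*}
where the last inequality uses that $E_t \in \Ee \subseteq \Ee_\Hh$ and $P \in \Hh$. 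This shows $(W_n)_{n\geq 0}$ is a non-negative supermartingale with $\E_P[W_0]=1$.

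The second step is to translate the reward bound into a wealth bound: since $\log$ is strictly increasing, the event $\{R_n \leq \log(1/\delta)\text{ for all }n\geq 1\}$ coincides with $\{W_n \leq 1/\delta\text{ for all }n\geq 1\}$, so that
\begin{equation*}
    \Pp\bigl(R_n > \log\tfrac{1}{\delta}\text{ for some }n\geq 1\bigr) = \Pp\bigl(\sup_{n\geq 1} W_n > \tfrac{1}{\delta}\bigr)\,.
\end{equation*}
Applying Ville's inequality to $(W_n)$ then yields $\Pp(\sup_{n\geq 1} W_n \geq 1/\delta) \leq \delta \E_P[W_0] = \delta$, and the claim follows by taking complements.

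I do not expect any serious obstacle here: the main subtlety is ensuring that $E_t(Z_t)$ is genuinely $\F_t$-measurable and that the conditional expectation identity above is justified, both of which are guaranteed by the measurability assumption in \Cref{def:game}. The result is essentially Ville's inequality plus the defining property of e-variables, and it serves as the template for the more delicate martingale constructions with multi-round e-variables later in the paper.
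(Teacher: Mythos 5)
Your proof is correct and follows exactly the same route as the paper: identify the wealth process $M_n=\prod_{t=1}^n E_t(Z_t)$ as a non-negative supermartingale (using the predictability of $E_t$, independence of $Z_t$, and the defining bound $\E_P[E_t]\leq 1$) and apply Ville's inequality. The paper states this in one line; your version merely supplies the routine verification details.
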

\begin{proof}
    The result follows directly from Ville's inequality, since $M_n = \prod_{t=1}^n E_t(Z_t)$ defines a non-negative super-martingale with respect to the natural filtration of the process $(Z_t)_{t\geq 1}$, with $M_0\equiv 1$.\qed
\end{proof}
The cumulative reward earned by the player serves as a quantitative measure of evidence against the hypothesis $\Hh$. Given a sequence of independent observations known to be drawn from some $P\in\PR(\Z)$, \Cref{lemma:game} justifies the following sequential testing procedure: the null hypothesis ``\emph{the data generating distribution $P$ is in $\mathcal{H}$}'' is rejected as soon as the player's total reward exceeds the threshold $\log(1/\delta)$, for a chosen confidence level $\delta \in (0,1)$. In this setup, $\delta$ controls the Type I error rate, ensuring that the probability of wrongly rejecting a true null is at most $\delta$. Remarkably,  \Cref{lemma:game} guarantees this control uniformly over time, allowing the statistician to freely decide when to stop the test. For more details on sequential testing by betting, we refer to \cite{ramdas2023game}, or Chapter 7 of \cite{ramdas2024hypothesis}.

To design a powerful test, we want the rewards to accumulate rapidly whenever the data provide evidence against the null. To this regard, the pool $\Ee$, from which the player can pick the e-variables, plays an important role: excluding useful functions may weaken the test, while including unnecessary ones (e.g., the constant $1/2$) adds no value. A carefully tailored class can simplify strategy design while preserving statistical power. The goal of this paper is to identify the ``best'' e-class to use when testing for the mean of a bounded real random variable.

\subsection{Testing for the mean and coin-betting}
Fix a Borel set $\X\subseteq[0,1]$, containing $0$ and $1$.\footnote{The main results of this work are actually valid for any Borel set whereof $[0,1]$ is the convex closure. The requirement that $0$ and $1$ belong to $\X$ slightly simplifies some proofs (e.g., \Cref{lemma:F}), which use the fact that $0$ and $1$ are in $\X$.}
Let $(X_t)_{t\geq 1}$ be a sequence of independent random variables drawn from an unknown fixed distribution $P^\star \in \PR_\X$, with mean $\mu^\star\in(0,1)$. To test whether $\mu^\star$ equals a given value $\mu \in (0,1)$, we can define the corresponding null hypothesis:
\begin{equation}\label{eq:Hmu}
\Hh_\mu = \{P \in \PR_{\X} : \E_P[X] = \mu\}\,,
\end{equation}
which is the set of all distributions on $\X$ having mean $\mu$. Rejecting $\Hh_\mu$ thus amounts to rejecting the claim that $\mu^\star = \mu$. Such null hypotheses are closely related to the problem of mean estimation, specifically, to constructing a sequence of intervals that, with high probability, contain the true mean (i.e., a \emph{confidence sequence}). We will make this connection more precise in \Cref{sec:conf}. We stress here that, for the time being, we assume that the sequence $(X_t)_{t\geq 1}$ is i.i.d., and we shall not challenge this assumption, regardless of whether or not the observed data \emph{appear} to exhibit such behaviour. In a way, this perspective aligns naturally with the goal of constructing confidence sequences that we will discuss in \Cref{sec:conf}, as simultaneously testing for the mean and the i.i.d.~assumption might lead to rejecting every point in $\X$ and returning empty confidence sets because ``\emph{the data do not look i.i.d.~enough}''. As a matter of facts, under the i.i.d.~model, mean estimation is well posed: we assume the existence of a fixed distribution $P^\star$ independently generating each observation, and our task is to estimate its mean. We remark that the independence assumption could be relaxed by just asking that each $X_t$ has fixed conditional mean $\mu^\star$ (to be estimated) given the past $\F_{t-1}$. The arguments we present next for the i.i.d.~setting carry over with essentially no change to this less restrictive case. However, to keep the exposition clearer, we focus here solely on the independent case.  The conditional setting will be addressed in the second part of this work (\Cref{sec:multi}), where we consider the more complex scenario in which the nature of the depedences between observations is  not given or assumed, but is instead challenged via statistical testing.

\subsubsection{The coin-betting e-class}

Recently, \cite{orabona2023tight} and \cite{waudbysmith23estimating} obtained some of the tightest known confidence sequences for the mean of bounded real random variables via an algorithmic approach based on sequential testing. The main idea behind both papers involves the following sequential testing game for $\Hh_\mu$, which can be thought as betting on the outcome of a ``continuous'' coin (see \citealt{orabona2023tight} for a thorough discussion on the ``coin-betting'' interpretation). 
\begin{definition}[Coin-betting game]\label{def:coin}
    Fix $\mu\in(0,1)$ and let $I_\mu=[(\mu-1)^{-1}, \mu^{-1}]$.\footnote{The definition of $I_\mu$ ensures that the game's rewards are well defined, as logarithms of non-negative quantities.} Consider the following sequential procedure. At each round $t\geq 1$, a player 
    \begin{itemize}\setlength{\itemsep}{0pt}
        \item picks\footnote{Again, we implicitly assume a measurable selection of $\beta_t$, namely the mapping $(x_1,\dots x_{t-1})\mapsto \beta_t$ is Borel measurable.} $\beta_t\in I_\mu$, based solely on the past observations $x_1,\dots, x_{t-1}$;
        \item observes a new data-point $x_t\in[0,1]$;
        \item receives the reward $\log \big(1+\beta_t(x_t-\mu)\big)$. 
    \end{itemize}
\end{definition}
We remark that this coin-betting game is a specific instance of the testing-by-betting game that we have  described earlier. Indeed, letting $E_t : x \mapsto 1 + \beta_t(x-\mu)$, it is straightforward to verify that $E_t$ is non-negative on $\X$, due to the restriction $\beta_t \in I_\mu=[(\mu-1)^{-1}, \mu^{-1}]$ in \Cref{def:coin}. Moreover, for any $P \in \mathcal{H}_\mu$, we have $\E_P[ E_t ] = 1$, which implies that $E_t$ is an e-variable for $\Hh_\mu$. Finally, the reward in the coin-betting game is precisely equal to $\log E_t(x_t)$. Hence, for the hypothesis $\Hh_\mu$, the coin-betting game above matches exactly the testing game of \Cref{def:game}, restricted to the \textit{coin-betting e-class}
\begin{equation}\label{eq:Ecb}\Ecb_\mu = \big\{E_\beta:x\mapsto 1+\beta(x-\mu)\,,\;\;\beta\in I_\mu\big\}\,.\end{equation}

\subsubsection{A suboptimal choice: the Hoeffding e-class}
Another perhaps natural e-class to test $\Hh_\mu$ is  
$$\EH_\mu = \{\EHa:x\mapsto e^{\alpha(x-\mu)-\alpha^2/8}\,,\;\;\alpha\in\R\}\,,$$
which we will refer to as the \emph{Hoeffding e-class}. The fact that this is an e-class follows immediately from the well known sub-Gaussian nature of bounded random variables (Hoeffding's lemma). 

Let us now consider two players, Alice and Bob, playing two different testing-by-betting games for $\Hh_\mu$. Alice plays a $\Ecb_\mu$-restricted game, while Bob a $\EH_\mu$-restricted game. We will now show that Alice's game is strictly stronger, from a statistical perspective, than Bob's one. More precisely, if Bob plays first, and picks $\EHa$, there is always a $\lambda_\alpha\in I_\mu$ such that, picking $E_{\lambda_\alpha}$,  Alice can be sure of getting a reward that is at least as high as Bob's one, no matter what the observation that round will be. Notably, the converse is not true: there are  $\lambda\in I_\mu$ such that no e-variable $\EHa\in\EH_\mu$  dominates $E_\lambda$ for every possible value of $x$. This is formalised in the next statement, whose simple proof relies on elementary calculus (see \ref{app:hoef}).
\begin{proposition}\label{prop:hoef}
    Fix $\mu\in(0,1)$. For each $\alpha\in\R$ there is $\lambda_\alpha\in I_\mu$ such that $E_{\lambda_\alpha}(x)\geq \EHa(x)$ for all $x\in\X$. On the other hand, fix any non-zero $\lambda\in I_\mu$. For every $\alpha\in\R$, there is at least a point $x_\alpha\in\X$ such that $\EHa(x_\alpha)<E_\lambda(x_\alpha)$.
\end{proposition}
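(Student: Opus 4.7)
The plan is to exploit the fact that $E^H_\alpha$ is convex on $[0,1]$ while $E_\lambda$ is affine, so that both claims reduce to endpoint checks at $x = 0$ and $x = 1$, which both lie in $\X$ by assumption.

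For the first claim, the standard chord inequality says that an affine function dominates a convex function on $[0,1]$ if and only if it does so at the two endpoints. Hence $E_\lambda(x) \geq E^H_\alpha(x)$ for every $x \in [0,1]$ is equivalent to the two endpoint bounds $1 - \lambda\mu \geq e^{-\alpha\mu - \alpha^2/8}$ and $1 + \lambda(1-\mu) \geq e^{\alpha(1-\mu) - \alpha^2/8}$, which rearrange into
$$\frac{e^{\alpha(1-\mu) - \alpha^2/8} - 1}{1-\mu} \leq \lambda \leq \frac{1 - e^{-\alpha\mu - \alpha^2/8}}{\mu}\,.$$
Since the lower bound is strictly greater than $-(1-\mu)^{-1}$ and the upper bound strictly smaller than $\mu^{-1}$, any such $\lambda$ automatically lies inside $I_\mu$. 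Non-emptiness of the interval rearranges into $\mu e^{\alpha(1-\mu)} + (1-\mu) e^{-\alpha\mu} \leq e^{\alpha^2/8}$, which is precisely Hoeffding's lemma applied to a Bernoulli$(\mu)$ random variable; this delivers the required $\lambda_\alpha$.

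For the second claim I would argue by contradiction. Assume that for some non-zero $\lambda \in I_\mu$ and some $\alpha \in \R$, one has $E^H_\alpha(x) \geq E_\lambda(x)$ for every $x \in \X$; then the bound holds in particular at $x = 0$ and $x = 1$. Taking the convex combination of these two bounds with weights $1-\mu$ and $\mu$ cancels the $\lambda$-terms and produces
$$\mu e^{\alpha(1-\mu) - \alpha^2/8} + (1-\mu) e^{-\alpha\mu - \alpha^2/8} \geq 1\,,$$
which contradicts Hoeffding's bound as soon as the latter is strict for $\alpha \neq 0$. For $\alpha = 0$, the argument is direct: $E^H_0 \equiv 1$ while $E_\lambda$ is non-constant with $E_\lambda(\mu) = 1$, so either $E_\lambda(0) > 1$ or $E_\lambda(1) > 1$, depending on the sign of $\lambda$, contradicting the assumption at an endpoint of $\X$.

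The main technical point is thus confirming strict Hoeffding for $\alpha \neq 0$ and $\mu \in (0,1)$. I would handle this with a short calculus argument: the map $\psi(\alpha) := \log\E[e^{\alpha(X-\mu)}] - \alpha^2/8$, for $X \sim \mathrm{Bern}(\mu)$, satisfies $\psi(0) = \psi'(0) = 0$, and $\psi''(\alpha)$ equals the variance of $X$ under the $\alpha$-tilted Bernoulli minus $1/4$, which is non-positive and vanishes at most at the single value of $\alpha$ for which the tilted mean equals $1/2$. Consequently $\psi'$ is strictly decreasing, so $\psi(\alpha) < 0$ for every $\alpha \neq 0$, which closes the argument.
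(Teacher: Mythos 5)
Your proof is correct. For the first claim it is essentially the paper's argument in a slightly different guise: both exploit the convexity of $\EHa$ on $[0,1]$ to reduce global domination to a comparison along the chord through $(0,\EHa(0))$ and $(1,\EHa(1))$, and both ultimately rest on the Bernoulli case of Hoeffding's lemma, $\mu e^{\alpha(1-\mu)}+(1-\mu)e^{-\alpha\mu}\le e^{\alpha^2/8}$; the paper proves this by showing that $u(\alpha)=\alpha^2/8+\mu\alpha-\log\big(1+\mu(e^\alpha-1)\big)$, which is exactly $-\psi(\alpha)$ in your notation, is convex with a zero minimum at $\alpha=0$. The paper fixes $\lambda_\alpha$ to be the slope of that chord and checks the single point $x=\mu$, whereas you describe the whole interval of admissible slopes and verify it is non-empty and contained in $I_\mu$; these are equivalent. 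For the second claim your route genuinely differs: the paper simply evaluates at $x_\alpha=\mu$, where $\EHa(\mu)=e^{-\alpha^2/8}<1=E_\lambda(\mu)$ for $\alpha\neq 0$, while you derive a contradiction from domination at $x=0$ and $x=1$ via a strict form of Hoeffding's lemma. Your version costs an extra (correct) calculus step to establish strictness, but it only uses the points $0$ and $1$, which are guaranteed to lie in $\X$; the paper's one-liner implicitly requires $\mu\in\X$, which the standing assumptions do not ensure (the footnote even allows sets such as $\X=\{0,1\}$), so your argument is in fact the more robust of the two on this point. The treatments of the degenerate case $\alpha=0$ coincide.
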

As a consequence of the above discussion, when testing $\Hh_\mu$ it is always ``better'' to restrict the player to $\Ecb_\mu$ rather than to $\EH_\mu$. We will make this intuition more formal in the next section.

\section{Majorising e-classes and optimal e-class}
Ideally, one aims to set up a powerful testing procedure, capable of rejecting the null as soon as there is enough evidence against it. However, achieving this depends on the strategy employed in the testing games. For example, stubbornly playing $E_t \equiv 1$ at all rounds in every game would produce powerless tests of no practical interest. Indeed, a player's strategy is most effective when it can rapidly increase the cumulative reward, whenever possible. In short, \emph{the highest the rewards, the more powerful the statistical test}. We have already seen at the end of the previous section that there is no point in considering a $\EH_\mu$-restricted game, as this is always outperformed by the $\Ecb_\mu$-restricted one. An even worse option would be the trivial restriction to $\Ee=\{1\}$, which can never lead to rejection. With this in mind, it is clear that carelessly restricting the player's choice to a subset  of $\Ee_{\Hh_\mu}$ could be highly detrimental, as it might force the player to adopt poor strategies. This point naturally raises the question: \emph{``Does restricting the player to the coin-betting e-class \eqref{eq:Ecb} loosen the confidence sequence?''}. Interestingly, for the round-wise testing-by-betting framework that we are considering the answer turns out to be negative. In order to make this statement rigorous we now introduce the concepts of majorising and optimal e-class.

First, we endow the set of real functions on a set $\Z$ with a partial ordering. Given two functions $f, f':\Z\to\R$, we say that $f$ \emph{majorises} $f'$, and write $f\succeq f'$, if $f(z)\geq f'(z)$ for all $z\in\Z$. If $f\succeq f'$ and there is a $z\in\Z$ such that $f(z)>f'(z)$, we say that $f$ is a \emph{strict} majoriser of $f'$, and we write $f\succ f'$. 

\begin{definition}An e-variable $E\in\Ee_\Hh$ is called   \emph{maximal} if there is no $E'\in\Ee_\Hh$ such that $E'\succ E$.
\end{definition}

Next, we introduce a way to compare different e-classes.
\begin{definition}
    Given two e-classes $\Ee$ and $\Ee'$, we say that $\Ee$ \emph{majorises} $\Ee'$ if, for any $E'\in\Ee'$, there is an e-variable $E\in\Ee$ such that $E\succeq E'$. An e-class is said to be a \emph{majorising} e-class if it majorises $\Ee_\Hh$. 
\end{definition}
\begin{lemma}\label{lemma:maj}
    Every majorising e-class contains all the maximal e-variables.
\end{lemma}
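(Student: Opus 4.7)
My plan is to argue directly from the two definitions in play, using the fact that any majorising e-class $\Ee$ is by hypothesis contained in $\Ee_\Hh$, so any element it supplies is automatically a legitimate e-variable for $\Hh$. Fix an arbitrary maximal e-variable $E^\star\in\Ee_\Hh$ and an arbitrary majorising e-class $\Ee$; the goal is to conclude that $E^\star\in\Ee$.

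First, I would apply the majorising property of $\Ee$ to the specific e-variable $E^\star$: there must exist some $E\in\Ee$ with $E\succeq E^\star$, i.e.\ $E(z)\geq E^\star(z)$ for every $z\in\Z$. This is the only way the hypothesis on $\Ee$ enters the argument.

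Next, I would upgrade this inequality to an equality by invoking maximality. If there existed a point $z_0\in\Z$ with $E(z_0)>E^\star(z_0)$, then by definition $E\succ E^\star$; since $E\in\Ee\subseteq\Ee_\Hh$, this would exhibit an element of $\Ee_\Hh$ that strictly majorises $E^\star$, contradicting the assumption that $E^\star$ is maximal. Hence $E(z)=E^\star(z)$ for all $z\in\Z$, so $E^\star=E$ belongs to $\Ee$, as desired.

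There is no real obstacle here: the lemma is a one-line unfolding of the definitions, namely ``majorising'' produces a $\succeq$-upper bound for $E^\star$ and ``maximal'' forces that upper bound to coincide with $E^\star$ itself. The only care needed is to keep in mind that an e-class is by definition a subset of $\Ee_\Hh$, so the candidate $E$ is itself eligible as a potential strict majoriser against which maximality can be tested.
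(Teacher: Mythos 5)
Your proof is correct and follows exactly the paper's own argument: the majorising property yields some $E\in\Ee$ with $E\succeq E^\star$, and maximality of $E^\star$ forces $E=E^\star$, so $E^\star\in\Ee$. You simply spell out the intermediate step (ruling out a point of strict inequality) that the paper leaves implicit.
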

\begin{proof}
    Let $E\in\Ee_\Hh$ be maximal and $\Ee$ a majorising e-class. There must be $E'\in\Ee$ such that $E'\succeq E$, but since $E$ is maximal it has to be that $E=E'$. Hence, $E\in\Ee$.\qed
\end{proof}
The significance of the notion of majorising e-class for our problem is straightforward: if $\Ee$ majorises $\Ee'$, then any strategy in an $\Ee'$-restricted game can be matched or outperformed (in terms of rewards) by a corresponding strategy in the $\Ee$-restricted game, regardless of the sequence of observations. This allows us to compare how the restriction to different e-classes affects the testing-by-betting game of \Cref{def:game}. Notably, \Cref{prop:hoef} implies that the coin-betting e-class always majorises the Hoeffding e-class.

\begin{definition}
    If a majorising e-class is contained in every other majorising e-class, it is called \emph{optimal}.
\end{definition}
For any $\Hh$, a majorising e-class always exists, as $\Ee_\Hh$ itself is a majorising e-class. However, an optimal e-class may not exist.\footnote{We refer  to the follow-up work \cite{clerico24optimal} for an example of non-existence of the optimal e-class. Proposition 2 therein shows that $\Hh = \{P\in\PR_{\X}\,:\,P(\{0\})\geq 1/2\}\cup\{U_{[0,1]}\}$ (with $U_{[0,1]}$ the uniform distribution on $[0,1]$) is a hypothesis for which the set of maximal e-variables is not a majorising e-class. Hence, $\Hh$ does not admit an optimal e-class.} Next, we state a sufficient and necessary condition for its existence. 

\begin{lemma}\label{lemma:maxopt}
    An optimal e-class exists if, and only if, the set of all maximal e-variables is a majorising e-class. If an optimal e-class exists, it is unique, it corresponds to the set of all maximal e-variables, and it is the only majorising e-class whose elements are all maximal. 
\end{lemma}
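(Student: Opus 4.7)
The plan is to let $\M$ denote the set of all maximal e-variables for $\Hh$ and show that the two claims reduce, via \Cref{lemma:maj}, to establishing that whenever an optimal e-class exists it must equal $\M$.

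For the ``if'' direction, suppose $\M$ is a majorising e-class. By \Cref{lemma:maj}, every majorising e-class contains $\M$, so $\M$ is itself contained in every majorising e-class. This is precisely the defining property of an optimal e-class, so $\M$ is optimal and, in particular, an optimal e-class exists.

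For the ``only if'' direction, suppose an optimal e-class $\Ee^\star$ exists. By \Cref{lemma:maj} applied to $\Ee^\star$ (which is majorising), we have $\M\subseteq\Ee^\star$. The main step is to prove the reverse inclusion, i.e.\ every element of $\Ee^\star$ is maximal. Suppose, towards a contradiction, that some $E\in\Ee^\star$ is not maximal, so that there exists $E'\in\Ee_\Hh$ with $E'\succ E$. Since $\Ee^\star$ is majorising, pick $E''\in\Ee^\star$ with $E''\succeq E'$; then $E''\succ E$, and in particular $E''\neq E$. I now claim that $\Ee^\star\setminus\{E\}$ is still a majorising e-class: for any $F\in\Ee_\Hh$ there exists $G\in\Ee^\star$ with $G\succeq F$, and if $G=E$ we may replace it by $E''\succeq E\succeq F$, which lies in $\Ee^\star\setminus\{E\}$. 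This contradicts the optimality of $\Ee^\star$, since an optimal e-class must be contained in every majorising e-class but $E\in\Ee^\star\setminus(\Ee^\star\setminus\{E\})$. Hence $\Ee^\star\subseteq\M$, yielding $\Ee^\star=\M$, and therefore $\M$ is a majorising e-class.

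The remaining assertions follow immediately. Uniqueness of the optimal e-class is clear, since the above argument shows it must coincide with $\M$. Finally, if $\Ee'$ is any majorising e-class all of whose elements are maximal, then $\M\subseteq\Ee'$ by \Cref{lemma:maj} and $\Ee'\subseteq\M$ by assumption, so $\Ee'=\M$. The only step requiring genuine care is the deletion argument in the ``only if'' direction, where one must verify that removing a non-maximal element does not destroy the majorising property; this is the sole real obstacle, and it is handled cleanly by routing the replacement through the dominator $E''$.
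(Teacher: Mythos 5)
Your proof is correct and follows essentially the same strategy as the paper's: both directions hinge on \Cref{lemma:maj}, and the key step in the ``only if'' direction is to exhibit a majorising e-class not containing a hypothetical non-maximal $E\in\Ee^\star$ and invoke optimality. The only cosmetic difference is that you delete $E$ and route the domination through an element $E''$ already in $\Ee^\star$, whereas the paper forms $(\Ee^\star\setminus\{E\})\cup\{E'\}$ with the external majoriser $E'$; both constructions work equally well.
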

\begin{proof}
    Denote as $\hat\Ee$ the set of all the maximal e-variables. Assume that there exists an optimal e-class $\Ee$. Let us show that all its elements are maximal. For  $E\in\Ee$, consider any element $E'\in\Ee_\Hh$ such that $ E'\succeq E$. We can construct an e-class $\Ee'$  replacing $E$ with $E'$ in $\Ee$, namely  $\Ee' = (\Ee\setminus\{E\})\cup\{E'\}$. Since $E'\succeq E$, it is clear that $\Ee'$ majorises $\Ee$. So, $\Ee'$ is a majorising e-class, as $\Ee$ is. Since $\Ee$ is optimal, $\Ee\subseteq\Ee'$, which implies $E=E'$. In particular, $E$ does not have any strict majoriser, and so it is maximal. In particular, $\Ee\subseteq\hat\Ee$. As $\Ee\supseteq\hat\Ee$ by \Cref{lemma:maj}, we conclude that $\Ee=\hat\Ee$, and so $\hat\Ee$ is a majorising e-class.

    Conversely, assume that $\hat\Ee$ is a majorising e-class. Let $\Ee$ be any other majorising e-class. By \Cref{lemma:maj}, $\hat\Ee\subseteq\Ee$. So, $\hat\Ee$ is contained in all the majorising e-classes, and hence it is optimal.
    
    Now, the remaining statements are a trivial consequence of what was shown above and \Cref{lemma:maj}.\qed
\end{proof}

Let us emphasise once more that, from our discussion thus far, it is clear that restricting the testing-by-betting game of \Cref{def:game} to a majorising e-class does not hinder the performance of the player, as for any unrestricted strategy $(E_t)_{t\geq 1}$ they can always pick a restricted strategy $(E_t')_{t\geq 1}$, whose cumulative rewards inevitably match or outperform those of $(E_t)_{t\geq 1}$, regardless of the sequence of observations. From a practical perspective, identifying the optimal e-class, when it exists, greatly simplifies the design of an effective strategy by narrowing the player's choice to the best possible e-variables. Specifically, if the optimal e-class exists and a player chooses an e-variable $E_t$ outside of it, they could always have picked an alternative $E_t'\succ E_t$, within the optimal e-class, whose reward is never worse than that of $E_t$ and is strictly higher for at least one possible value that $x_t$ might take. Conversely, when a player selects an e-variable from the optimal e-class, no other choice can be guaranteed to be better before observing $x_t$, since the player's pick is a maximal e-variable. As a straightforward consequence, within this round-wise testing-by-betting approach, the optimal approach to test $\Hh_\mu$ consists in restricting game to the optimal e-class. We show next that this coincides precisely with the coin-betting formulation.

\section{Optimality of the coin-betting e-class}\label{sec:optcoin}
For any $\mu\in(0,1)$, define the hypothesis $\Hh_\mu$ as in \eqref{eq:Hmu}. We now show that the optimal e-class for $\Hh_\mu$ exists and coincides with the coin-betting e-class $\Ecb_\mu$, defined in \eqref{eq:Ecb}. First, let us show that each e-variable is majorised by the function $F_\mu = \max(E_{\mu^{-1}}, E_{(\mu-1)^{-1}})$. 

\begin{lemma}\label{lemma:F}
    Fix $\mu\in(0,1)$ and consider the function $F_\mu:\X\to[1,+\infty)$ defined as 
    \begin{figure}[b!]
        \centering
        \begin{tikzpicture}
\def\m{0.58}
\def\bzero{-1.3955}
\def\buno{1.045}
\def\xzero{0.25}
\def\xuno{0.9}

  \begin{axis}[
    domain=0:1,
    samples=100,
    axis lines=middle,
    axis on top,
    xlabel={$x$},
    xtick={\xzero,\m,\xuno,1},
    xticklabels={$u_0$, $\mu$, $u_1$, },
    ytick={1},
    ymin=-0.25, ymax=2.75,
    xmin=-.1, xmax=1.25,
    width=9cm,
    height=7cm,
    thick,
    enlargelimits=false,
    clip=false,
    axis line style={->},
    tick style={thick},
  ]

    \addplot [
      fill=salmon!30, draw=none
    ] coordinates {
      (\m,1)
      (1,{1/\m})
      (1,{1 + \buno*(1 - \m)})
    };
    \addplot [
      fill=salmon!30, draw=none
    ] coordinates {
      (\m,1)
      (0,0)
      (0,{1 + \buno*(0 - \m)})
    };

    \addplot [
      fill=apricot!20, draw=none
    ] coordinates {
      (\m,1)
      (0,{-1/(\m-1)})
      (0,{1 + \bzero*(0 - \m)})
    };
    \addplot [
      fill=apricot!20, draw=none
    ] coordinates {
      (\m,1)
      (1,0)
      (1,{1 + \bzero*(1 - \m)})
    };

    \addplot[salmon, dashed, domain=0:1, very thick] {1+\buno*(x-\m)};
    \addplot[apricot, dashed, domain=0:1, very thick] {1+\bzero*(x-\m)};

    \addplot[purple, thick, domain=0:1] {1-(x-\m)/2};
    \node[purple, anchor=south west, font=\scriptsize] at (axis cs:1,.65) {$1\!+\!\beta^\star\!(x\!-\!\mu)$};
    \node[salmon, anchor=south west, font=\scriptsize] at (axis cs:1,1.3) {$1\!+\!\beta_1\!(x\!-\!\mu)$};
    \node[apricot, anchor=south west, font=\scriptsize] at (axis cs:1,.3) {$1\!+\!\beta_0\!(x\!-\!\mu)$};
    \node[black, anchor=center, font=\scriptsize] at (axis cs:.2,2.1) {$F_\mu$};
    \node[olive, anchor=center, font=\scriptsize] at (axis cs:.48,.57) {$E$};
    \node[pastelblue, anchor=center, font=\scriptsize] at (axis cs:.2,1.3) {$U_0$};
    \node[pastelblue, anchor=center, font=\scriptsize] at (axis cs:.96,1.2) {$U_1$};
    \node[red, anchor=center, font=\scriptsize] at (axis cs:\m,1.4) {$U^\star$};

    \node[black, anchor=center] at (axis cs:1,-0.14) {$1$};
    \node[black, anchor=center] at (axis cs:-.05,-0.14) {$0$};

    \addplot[domain=\m:1, black, very thick] {x/\m};

    \addplot[domain=0:\m, black, very thick] {(x - 1)/(\m - 1)};

    \pgfmathdeclarefunction{E}{1}{%
         \pgfmathparse{(-305/7)*#1^4 + (1811/21)*#1^3 - (7379/140)*#1^2 + (793/84)*#1 + (11/10)}%
    }

    \pgfmathsetmacro{\yzero}{E(\xzero)}
    \pgfmathsetmacro{\yuno}{E(\xuno)}
    \pgfmathsetmacro{\ym}{\yzero + (\yuno - \yzero)*(\m - \xzero)/(\xuno - \xzero)}
    
    \addplot[olive, very thick, domain=0:1] {E(x)};

    \addplot[only marks, mark=*, mark size=1.5pt, pastelblue] coordinates {
        (\xzero, \yzero)
        (\xuno, \yuno)
    };

    \draw[dashed, thin, pastelblue] (axis cs:\xzero,\yzero) -- (axis cs:\xuno,\yuno);
    \draw[dashed, thin] (axis cs:0,1) -- (axis cs:1,1);
    \draw[dotted, thin] (axis cs:\xzero,\yzero) -- (axis cs:\xzero,0);
    \draw[dotted, thin] (axis cs:\xuno,\yuno) -- (axis cs:\xuno,0);
    \draw[dotted, thin] (axis cs:\m,\ym) -- (axis cs:\m,0);

    \addplot[only marks, mark=x, mark size=3pt, red] coordinates {
        (\m, \ym)
    };
    \end{axis}

\end{tikzpicture}
        \caption{Pictorial representation of the main step in the proof of \Cref{thm:main}. $1+\beta_1(x-\mu)$ dominates $E$ for $x\in[0,\mu)$, while $1+\beta_0(x-\mu)$ dominates $E$ for $x\in(\mu, 1]$. If there is $\beta^\star\in(\beta_0,\beta_1)$, then we can find $u_0\in[0,\mu)$ and $u_1\in(\mu,1]$ such that $E(u_0)>1+\beta^\star(u_0-\mu)$ and $E(u_1)>1+\beta^\star(u_1-\mu)$, represented by the points $U_0$ and $U_1$ being above the purple line $1+\beta^\star(x-\mu)$. The probability measure $\hat P$ supported on $\{u_0,u_1\}$ with mean $\mu$ is in $\Hh_\mu$. We have $\E_{\hat P}[E]>1$. Indeed, this expected value corresponds to the vertical coordinate of the point $U^\star$, the intersection of the line connecting $U_0$ and $U_1$ with the vertical line at $x=\mu$. This is a contradiction if $E$ is an e-variable, in which case it must be that $\beta_0\geq\beta_1$.}
        \label{fig:mu}
    \end{figure}
    $$F_\mu\;:\;x\mapsto\begin{cases} 1 + \frac{1}{\mu}(x-\mu)&\text{if $x\geq\mu$;}\\ 1 + \frac{1}{\mu-1}(x-\mu)&\text{if $x<\mu$.}\end{cases}$$ For any $x\in\X$ there is $P_x\in\Hh_\mu$ such that $P_x(\{x\})= 1/F_\mu(x)>0$. Moreover, $F_\mu\succeq E$ for all $E\in\Ee_{\Hh_\mu}$.
\end{lemma}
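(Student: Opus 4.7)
The plan splits naturally into two independent parts.

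For the first claim, the strategy is to construct, for each $x\in\X$, an explicit two-point distribution in $\Hh_\mu$ supported on $\{x\}$ together with one of the endpoints $0$ or $1$ (which is why the assumption $0,1\in\X$ is needed). Concretely, for $x\geq\mu$ I would take
\[
P_x = \tfrac{\mu}{x}\,\delta_x + \bigl(1-\tfrac{\mu}{x}\bigr)\,\delta_0,
\]
and for $x<\mu$ I would take
\[
P_x = \tfrac{1-\mu}{1-x}\,\delta_x + \tfrac{\mu-x}{1-x}\,\delta_1.
\]
A direct computation verifies that both weights are non-negative and sum to one (so these are probability measures on $\X$), that $\E_{P_x}[X]=\mu$ (so $P_x\in\Hh_\mu$), and that the mass placed on $\{x\}$ coincides with $1/F_\mu(x)$, which is strictly positive since $F_\mu$ takes values in $[1,\max\{1/\mu,1/(1-\mu)\}]$. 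The boundary case $x=\mu$ (if $\mu\in\X$) is handled by the first formula, which degenerates to $\delta_\mu$, matching $F_\mu(\mu)=1$.

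For the majorisation claim, I would argue by contradiction. Suppose some $E\in\Ee_{\Hh_\mu}$ satisfies $E(x_0)>F_\mu(x_0)$ at some $x_0\in\X$. Using the measure $P_{x_0}$ built above and the non-negativity of $E$ to discard the contribution of the second atom, I obtain
\[
\E_{P_{x_0}}[E] \;\geq\; P_{x_0}(\{x_0\})\cdot E(x_0) \;=\; \frac{E(x_0)}{F_\mu(x_0)} \;>\; 1,
\]
which contradicts the defining inequality $\E_{P}[E]\leq 1$ for $P=P_{x_0}\in\Hh_\mu$. Hence $F_\mu(x)\geq E(x)$ for every $x\in\X$.

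I do not anticipate any real obstacle: both parts reduce to elementary linear algebra on two-point distributions in $[0,1]$ with prescribed mean, and the only delicate point is to note that the constructions crucially use $0,1\in\X$, exactly the assumption flagged in the footnote accompanying the definition of $\X$.
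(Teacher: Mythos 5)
Your proposal is correct and follows essentially the same route as the paper: the same two-point measures $P_x$ anchored at $0$ (for $x\geq\mu$) or at $1$ (for $x<\mu$), and the same one-line bound $1\geq\E_{P_x}[E]\geq P_x(\{x\})E(x)=E(x)/F_\mu(x)$, which you merely phrase as a contradiction rather than a direct inequality. The only cosmetic difference is that you write out the $x<\mu$ measure explicitly where the paper says ``similarly''.
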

\begin{proof}
    For $x\in\X\cap[\mu,1]$, let $P_x = \frac{\mu}{x}\delta_x + (1-\frac{\mu}{x})\delta_0$. Then, $P_x\in\Hh_\mu$ and $P_x(\{x\}) = \mu/x = 1/F_\mu(x)$. Similarly, if $x<\mu$ we can find a measure $P_x$ in $\Hh_\mu$, supported on $\{x,1\}$, with mass $1/F_\mu(x)$ on $x$. To check that $F_\mu$ majorises all the e-variables, fix $E\in\Ee_{\Hh_\mu}$ and $x\in\X$. Let  $P_x\in\Hh_\mu$ have mass $1/F_\mu(x)$ on $x$. Then, $1\geq \E_{P_x}[E] \geq P_x(\{x\})E(x) = E(x)/F_\mu(x)$, and we conclude. \qed
\end{proof}

\begin{theorem}\label{thm:main}
    For any $\mu\in(0,1)$, the coin-betting e-class $\Ecb_\mu$ is the optimal e-class for $\Hh_\mu$.
\end{theorem}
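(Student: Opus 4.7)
By \Cref{lemma:maxopt}, it suffices to verify that $\Ecb_\mu$ is a majorising e-class all of whose elements are maximal. Maximality of any $E_\beta \in \Ecb_\mu$ is short: if $E' \in \Ee_{\Hh_\mu}$ were to satisfy $E' \succeq E_\beta$ with strict inequality at some $x_0 \in \X$, \Cref{lemma:F} would provide a two-point measure $P_{x_0} \in \Hh_\mu$ with $P_{x_0}(\{x_0\}) > 0$. Since $E_\beta$ is affine and $\E_P[E_\beta] = 1$ for every $P \in \Hh_\mu$, the pointwise inequality (strict at $x_0$) would upgrade to $\E_{P_{x_0}}[E'] > \E_{P_{x_0}}[E_\beta] = 1$, contradicting $E' \in \Ee_{\Hh_\mu}$. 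Hence each $E_\beta$ is maximal.

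\textbf{Majorising property.} The substantive part is to show that every $E \in \Ee_{\Hh_\mu}$ admits some $E_\beta \in \Ecb_\mu$ with $E_\beta \succeq E$. The inequality $1 + \beta(x - \mu) \geq E(x)$ splits into two half-line constraints, which motivates the definitions
$$\beta_+ \;=\; \sup_{x \in \X,\, x > \mu} \frac{E(x) - 1}{x - \mu}, \qquad \beta_- \;=\; \inf_{x \in \X,\, x < \mu} \frac{E(x) - 1}{x - \mu}.$$
Any $\beta \in [\beta_+, \beta_-]$ satisfies the pointwise inequality on $\X \setminus \{\mu\}$, and the case $x = \mu$ is automatic since $E(\mu) \leq F_\mu(\mu) = 1$ by \Cref{lemma:F}. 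Applying \Cref{lemma:F} pointwise at $x > \mu$ and at $x < \mu$ also gives $\beta_+ \leq \mu^{-1}$ and $\beta_- \geq (\mu-1)^{-1}$, so any admissible $\beta$ will lie in $I_\mu$ automatically.

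\textbf{The key inequality.} I expect the main obstacle to be showing $\beta_+ \leq \beta_-$, so that the interval $[\beta_+, \beta_-]$ is non-empty. This is exactly what \Cref{fig:mu} depicts, and I would prove it by contradiction. If $\beta_+ > \beta_-$, pick $\beta^\star$ strictly between them; by definition of the supremum and infimum, there exist $u_1 \in \X \cap (\mu, 1]$ with $E(u_1) > 1 + \beta^\star(u_1 - \mu)$ and $u_0 \in \X \cap [0, \mu)$ with $E(u_0) > 1 + \beta^\star(u_0 - \mu)$ (the second inequality reverses direction upon clearing the negative denominator). The unique two-point distribution $\hat P$ supported on $\{u_0, u_1\}$ with mean $\mu$ lies in $\Hh_\mu$, and taking a convex combination of the two strict inequalities yields
$$\E_{\hat P}[E] \;>\; 1 + \beta^\star\bigl(\E_{\hat P}[X] - \mu\bigr) \;=\; 1,$$
contradicting $E \in \Ee_{\Hh_\mu}$.

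\textbf{Conclusion.} Combining these two facts, $\Ecb_\mu$ is a majorising e-class consisting entirely of maximal e-variables, and \Cref{lemma:maxopt} identifies it as the unique optimal e-class for $\Hh_\mu$. The only non-routine ingredient is the non-emptiness of $[\beta_+, \beta_-]$; everything else is bookkeeping around \Cref{lemma:F}.
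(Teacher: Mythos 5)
Your proposal is correct and follows essentially the same route as the paper's proof: the maximality argument via \Cref{lemma:F} is identical, and your key inequality $\beta_+ \leq \beta_-$ with the two-point-measure contradiction is exactly the argument depicted in \Cref{fig:mu}, merely phrased with explicit sup/inf ratios instead of the paper's intervals $B_0$ and $B_1$ (whose endpoints coincide with your $\beta_-$ and $\beta_+$ after clipping to $I_\mu$).
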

\begin{proof}
    First, let us show that $\Ecb_\mu$ is a majorising e-class. For a pictorial representation of this part of the proof, the reader is invited to look at \Cref{fig:mu}. Fix an arbitrary $E\in\Ee_{\Hh_\mu}$. Define the sets
    $$B_0 = \left\{\beta\in I_\mu\;:\; \inf_{x\in\X\cap[0,\mu)}\big(E_\beta(x)-E(x)\big)\geq0\right\}\;\text{and}\,\; B_1 = \left\{\beta\in I_\mu\;:\; \inf_{x\in\X\cap(\mu,1]}\big(E_\beta(x)-E(x)\big)\geq0\right\}\,,$$ where we recall that $I_\mu = [(\mu-1)^{-1}, \mu^{-1}]$ and $E_\beta:x\mapsto 1+\beta(x-\mu)$. Both sets are closed and convex (as they are intersections of closed and convex sets). By \Cref{lemma:F}, $(\mu-1)^{-1}\in B_0$ and $\mu^{-1}\in B_1$, so $B_0 = [(\mu-1)^{-1},\beta_0]$ and $B_1 = [\beta_1, \mu^{-1}]$, for some $\beta_0$ and $\beta_1$ in $I_\mu$. We will now show that $B_0\cap B_1\neq\varnothing$, or equivalently that $\beta_0\geq\beta_1$. Assume that this was not the case and $\beta_0<\beta_1$. Let $\beta^\star\in(\beta_0, \beta_1)$. Then, $\beta^\star\notin B_0$ and $\beta^\star\notin B_1$. In particular, there are $u_0<\mu$ and $u_1<\mu$, in $\X$, such that $E(u_0)>E_{\beta^\star}(u_0)$ and $E(u_1)>E_{\beta^\star}(u_1)$.  As $\mu\in(u_0, u_1)$, there is $\hat P\in\Hh_\mu$ with support $\{u_0,u_1\}$. Note that $\E_{\hat P}[E_{\beta^\star}] = 1 + \beta^\star(\E_{\hat P}[X] -\mu) = 1$. But $E$ is strictly larger than $E_{\beta^\star}$ on $\supp(\hat P)$, and so $\E_{\hat P}[E] >1$, which is a contradiction since $E$ is an e-variable. So, $\beta_0\geq\beta_1$, and there exists $\hat\beta\in B_0\cap B_1$. By construction, $E_{\hat\beta}\in\Ecb_\mu$ and $E_{\hat\beta}(x)\geq E(x)$ for all $x\in\X$ different from $\mu$. If  $x=\mu$ by \Cref{lemma:F} we have $E(\mu)\leq F_\mu(\mu) = 1 = E_{\hat\beta}(\mu)$, so $E\preceq E_{\hat\beta}$. As the choice of $E$ was arbitrary, $\Ecb_\mu$ is a majorising e-class. 
    
    Once established that $\Ecb_\mu$ is a majorising e-class, by \Cref{lemma:maxopt} we only need to show that all its elements are maximal. Fix $E\in\Ecb_\mu$, and consider an e-variable $\hat E\in\Ee_{\Hh_\mu}$ such that $\hat E\succeq E$. Fix any $x\in\X$. By \Cref{lemma:F}, there is $P\in\Hh_\mu$ such that $P(\{x\})>0$. Since $E\in\Ecb_\mu$, we have $\langle P, E\rangle =1$, and so $0\leq P(\{x\})(\hat E(x)-E(x))\leq\E_P[\hat E -E] =\E_P[\hat E] -1\leq 0$. Since $P(\{x\})>0$, we get $\hat E(x)=E(x)$ and, $x$ being arbitrary,  $\hat E=E$. Hence, $E$ is maximal, as it has no strict majoriser. \qed
\end{proof}

\section{From mean testing to confidence sequences}\label{sec:conf}
Before moving to the more complex setting of multi-round e-variables and e-processes, which allow for tests that challenge the dependence structure of the observations, we first illustrate how sequential testing can be directly applied to the problem of mean estimation. In particular, we show how this framework naturally gives rise to confidence sequences, a sequential counterpart to classical confidence intervals that dates at least back to \cite{darling67confidence}. We consider the following approach to constructing confidence sequences via hypothesis testing: at each time step, we  test  each candidate value $\mu$ for the mean, and include in the confidence set those values that are not rejected. For further discussion on the connection between sequential testing and confidence sequences, we refer  to \cite{ramdas2022admissible}. 

As usual, $(X_t)_{t\geq 1}$ is a sequence of independent draws from $P^\star\in\PR(\X)$, whose mean $\mu^\star\in(0,1)$ has to be estimated. Let $\mathcal{F} = (\F_t)_{t\geq 0}$ represent the natural filtration generated by $(X_t)_{t\geq 1}$, where $\F_t = \sigma(X_1, \dots, X_t)$ captures all information available up to time $t$. Fix a confidence level parameter $\delta \in (0,1)$. A \emph{confidence sequence} $(S_t)_{t\geq 1}$ is a sequence of random sets\footnote{Here, by random sets we simply mean sets that depend on the sequence of random observations $(X_t)_{t\geq 1}$.} such that the sequence of events $(\{\mu^\star\in S_t\})_{t\geq 1}$ is adapted  to the filtration $\F$ (i.e., for all $t\geq 1$, $\{\mu^\star\in S_t\}$ is $\F_{t}$-measurable) and satisfies  $$\Pp\big(\mu^\star\in S_t\,,\;\forall t\geq 1\big) \geq 1-\delta\,.$$
Intuitively, this means that $(S_t)_{t\geq 1}$ provides a set of plausible values for $\mu^\star$ at each time step $t$, while ensuring that the true mean remains in these sets indefinitely with high probability.

We can leverage the testing-by-betting game of \Cref{def:game} to obtain a confidence sequence for the mean $\mu^\star$ of $P^\star$. For  $\mu\in(0,1)$, we define the hypothesis $\Hh_\mu$ as in \eqref{eq:Hmu}, namely $\Hh_\mu$ contains all probability measures on $\X$ with mean $\mu$. For each $\mu$, we fix an e-class $\Ee_\mu\subseteq\Ee_{\Hh_\mu}$ and we consider an $\Ee_\mu$-restricted testing-by-betting game on $\Hh_\mu$, where the player observes $(X_t)_{t\geq 1}$, the sequence of draws from $P^\star$. For each one of these games, denote as $R_n(\mu)$ the player's cumulative reward at round $n$. We can then construct a confidence sequence as follows.
\begin{proposition}
    The sequence $(S_n)_{n\geq 1}$, defined as $$S_n = \big\{\mu\in(0,1)\;:\;R_n(\mu)\leq\log\tfrac{1}{\delta}\big\}\,,$$ is a confidence sequence for the mean $\mu^\star$ of the data-generating probability measure $P^\star$.
\end{proposition}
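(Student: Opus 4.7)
The plan is to observe that this proposition follows almost immediately from \Cref{lemma:game} (the Ville-based guarantee for testing-by-betting games), once one notices that the true mean corresponds to a hypothesis the data-generating distribution actually satisfies. Concretely, since $P^\star$ is a distribution on $\X$ with mean $\mu^\star$, we have $P^\star\in\Hh_{\mu^\star}$ by the very definition \eqref{eq:Hmu} of $\Hh_{\mu^\star}$. Thus, the $\Ee_{\mu^\star}$-restricted testing-by-betting game on $\Hh_{\mu^\star}$, played against the observations $(X_t)_{t\geq 1}$, is an instance of exactly the setup considered in \Cref{lemma:game}.

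First I would invoke \Cref{lemma:game} with $\Hh=\Hh_{\mu^\star}$, $P=P^\star$, and $\Ee=\Ee_{\mu^\star}$. This yields
\[
\Pp\bigl(R_n(\mu^\star)\leq\log\tfrac{1}{\delta}\,,\;\forall n\geq 1\bigr)\geq 1-\delta\,.
\]
Next, I would unfold the definition of $S_n$ to identify the events: for each $n\geq 1$, by construction $\mu^\star\in S_n$ if and only if $R_n(\mu^\star)\leq\log(1/\delta)$. Taking the intersection over all $n$ turns the statement above into
\[
\Pp\bigl(\mu^\star\in S_n\,,\;\forall n\geq 1\bigr)\geq 1-\delta\,,
\]
which is the required high-probability coverage guarantee.

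Finally, I would dispatch the adaptedness condition: each $E_t(\,\cdot\,;\mu^\star)$ in the game for $\Hh_{\mu^\star}$ is, by the measurability assumption in \Cref{def:game}, a Borel function of $X_1,\dots,X_{t-1}$ and the evaluation point, so $R_n(\mu^\star)=\sum_{t=1}^n\log E_t(X_t)$ is $\F_n$-measurable. Hence $\{\mu^\star\in S_n\}=\{R_n(\mu^\star)\leq\log(1/\delta)\}$ is $\F_n$-measurable, completing the verification of the confidence-sequence definition.

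There is essentially no hard step here: the content of the proposition is really just a restatement of \Cref{lemma:game} in the language of confidence sequences. The only minor subtlety worth flagging is that we are simultaneously running a continuum of games indexed by $\mu\in(0,1)$, but this causes no issue because the guarantee is applied only to the single game associated with the (unknown but fixed) true mean $\mu^\star$; the remaining games serve merely to define the random sets $S_n$ and do not enter the probability bound.
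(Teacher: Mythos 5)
Your proposal is correct and follows essentially the same route as the paper: identify $\{\mu^\star\in S_n\}$ with $\{R_n(\mu^\star)\leq\log\frac1\delta\}$, check $\F_n$-measurability, and apply \Cref{lemma:game} to the single game at $\mu^\star$ using $P^\star\in\Hh_{\mu^\star}$. No gaps.
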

\begin{proof}
    For each $n\geq 1$, we have that $\{\mu^\star\in S_n\}=\{R_n(\mu^\star)\leq\log\frac{1}{\delta}\}$, which is a $\F_{n}$-measurable event. Moreover, $$\Pp\big(\mu^\star\in S_n\,,\;\forall n\geq1 \big) = \Pp\big(R_n(\mu^\star)\leq\log\tfrac{1}{\delta}\,,\;\forall n\geq1 \big)\geq1-\delta$$ by \Cref{lemma:game}, as the observations are drawn from $P^\star\in\Hh_{\mu^\star}$. \qed
\end{proof}
It is worth stressing that the strength of the resulting confidence sequence depends directly on the power of the underlying sequential tests for each hypothesis $\Hh_\mu$: more powerful tests yield tighter confidence sets. In particular, the optimality result for the coin-betting e-class established in \Cref{sec:optcoin} implies that, for each $\mu$, the testing game should be restricted to the class $\Ecb_\mu$. With this choice, we recover the coin-betting framework to derive confidence sequences leveraged by \cite{orabona2023tight} and \cite{waudbysmith23estimating}. 

Both \citet{orabona2023tight} and \citet{waudbysmith23estimating} propose explicit strategies for placing bets in the coin-betting game, leading to concrete confidence sequences. While the present work's focus is on defining the optimal betting game rather than designing specific strategies, it is still useful to look more closely at the approach of \citet{orabona2023tight} to  illustrate how the optimality of the coin-betting e-class simplifies the construction of a strategy  for the game of \Cref{def:game}. They employ a coin-betting strategy based on the universal portfolio algorithm, a special instance of online learning Bayesian aggregation techniques (see, e.g., Chapters 9 and 10 of \citealp{cesabianchi06prediction}). In this approach, a prior distribution $\rho_1$ is fixed over the decision space $I_\mu$, and then updated sequentially using observed data. Specifically, at round $t$, one defines $$\dd\rho_t(\lambda) = \zeta_t^{-1}\prod_{i=1}^{t-1}(1+\lambda x_i)\dd\rho_1(\lambda)\,,$$
where $\zeta_t$ is the normalising constant ensuring that $\rho_t$ is a  probability measure on $I_\mu$. The bet $\lambda_t$ is then chosen as the posterior mean: $\lambda_t = \int_{I_\mu} \lambda \dd\rho_t(\lambda)$.

This strategy can be interpreted as performing Bayesian averaging over the coin-betting e-class $\Ecb_\mu$, leveraging its simple and low-dimensional parametric structure. This is  a concrete example of how knowing explicitly the optimal e-class can help designing powerful testing-by-betting strategies. Indeed, such approach would not be feasible on the full class of all e-variables, an infinite dimensional functional space where even the definition of a prior can become problematic, if we did not know that the prior should be supported on the coin-betting e-class. Importantly, the optimality of $\Ecb_\mu$ ensures that no statistical power is sacrificed by restricting to this class. Notably, a similarly structured averaging strategy over a suboptimal e-class, such as the Hoeffding e-class or its convex hull, would remain well defined and computationally tractable, but lead to strictly worse performance, as shown by \Cref{prop:hoef}.  

\section{Optimality beyond single-round e-variables}\label{sec:multi}
Up to this point, we have focused on hypotheses defined as subsets of the space $\PR_{\X}$, which cannot capture depedences across multiple rounds. In such setting, the dependence structure among observations was fixed and assumed a priori, rather than being subject to testing. Yet, e-variable-based testing naturally extends to more general hypotheses that span multiple rounds and can account for sequential or dependent data structures. In the second part of the paper, we turn our attention to this richer framework. Remarkably, an  adaptation of the proof strategy used for \Cref{thm:main} and depicted in \Cref{fig:mu} allows us to show that coin-betting-based testing is also optimal in the setting where the sequence of observations has a fixed conditional mean $\mu$. We show that this is the case with multi-round e-variables for a fixed time horizon, and then extend the result to  testing with e-processes. Conversely, we will show that when testing the i.i.d.\ assumption, the coin-betting approach no longer yields the optimal e-class.

\subsection{Optimality with multi-round e-variables}
For any $\mu\in(0,1)$ and $T\geq 1$, we let 
$$\Hh_\mu^T = \big\{P\in\PR_{\X^T}\,:\,\E_P[X_t|X^{t-1}]=\mu\,,\;\forall t\in[1:T]\big\}\,,\footnote{The equality in $\E_P[X_t|X^{t-1}]=\mu$ has to be interpreted as holding $P$-almost everywhere.}$$ where $\E_P[X_1|X^0]=\E_P[X_1]$.
Note that $\Hh_\mu^T$ is a hypothesis on $\X^T$. As such, its set of e-variables will consist of Borel functions from $\X^T$ to $\R$. We denote the set of all e-variablues relative to $\Hh_\mu^T$ as $\Ee_\mu^T$. 

Let us consider a test where all the $T$ observations $x^T$ are seen at once, where a single e-variable $E\in\Ee_\mu^T$ needs to be selected. If $E(x^T)\geq 1/\delta$, the null $\Hh^T_\mu$ is rejected. We remark that this setting departs from the sequential testing-by-betting games discussed in the previous sections, as now the data set size $T$ is fixed in advance and the player makes a single decision before seeing the entire data set. In a way, this is as if we where playing a \emph{single} round in the game of \Cref{def:game}, with $\Z=\X^T$.\footnote{Of course, this can also be extended in a sequential game, where each round a new block  of $T$ data points is observed.}  Despite this difference, the connection to the $T$-round coin-betting game of \Cref{def:coin} remains strong. Indeed, in the coin-betting setup that we have considered earlier, the wealth of the player at round $T$ is a non-negative function of the observations, which takes the form $M_T(x^T) = \prod_{t=1}^n E_t(x_t)$, where $E_t$ is in the form $E_{\lambda_t}$, with $\lambda_t$ chosen as a function of the past observations $x^{t-1}$. It is straightforward to check that under the hypothesis $\Hh_\mu^T$, the expectation of $M_T$ is always exactly one ($M_T$ is a martingale), which shows that $M_T$ is an e-variable. Next, we show that the e-class of e-variables in this form coincides with the optimal e-class for $\Hh_\mu^T$. 

We let $\Lambda^T_\mu$ be a set of $T$-tuples of functions defined as
$$\Lambda^T_\mu = \big\{\lambda^T=(\lambda_1,\dots,\lambda_T)\,:\,\text{$\lambda_t$ is a Borel function from $\X^{t-1}$ to $I_\mu$}\big\}\,.\footnote{Here and henceforth, a function from $\X^0$ to $I_\mu$ is simply an element of $I_\mu$, so that whenever $\lambda_1(x^0)$ appears, it always has to be interpreted as an element $\lambda_1\in I_\mu$.}$$ Given $\lambda^T\in\Lambda^T_\mu$, we define the e-variable $E_{\lambda^T}\in\Ee^T_\mu$ as 
$$E_{\lambda^T}(x^T) = \prod_{t=1}^T\big(1+\lambda_t(x^{t-1})(x_t-\mu)\big)\,.$$
We define the \emph{T round coin-betting e-class} $\EcbT_\mu = \{E_{\lambda^T}\,:\,\lambda^T\in\Lambda_\mu^T\}$.

\begin{theorem}\label{thm:multi}
    Fix any $T\geq 1$ and $\mu\in(0,1)$. $\EcbT_\mu$ is the optimal e-class for $\Hh_\mu^T$. 
\end{theorem}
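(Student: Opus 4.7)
The plan is to prove the theorem by induction on $T$, with the base case $T=1$ supplied by \Cref{thm:main}. As in that proof, I would establish separately that $\EcbT_\mu$ is a majorising e-class and that every element of $\EcbT_\mu$ is maximal, and then invoke the uniqueness clause of \Cref{lemma:maxopt} to conclude optimality.

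For the majorising direction, I fix $E\in\Ee_\mu^T$ and introduce the pointwise worst-case conditional expectation
$$c(x^{T-1})=\sup_{Q\in\Hh_\mu^1}\E_Q[E(x^{T-1},\,\cdot\,)]\,,$$
where $\Hh_\mu^1$ denotes the single-round hypothesis of \eqref{eq:Hmu}. My plan hinges on three steps. First, I claim $c$ is itself an e-variable for $\Hh_\mu^{T-1}$: given any $P'\in\Hh_\mu^{T-1}$ and $\varepsilon>0$, a measurable selection $x^{T-1}\mapsto Q_{x^{T-1}}\in\Hh_\mu^1$ almost attaining the supremum allows one to build $P\in\Hh_\mu^T$ with $X^{T-1}$-marginal $P'$ and conditional law $Q_{x^{T-1}}$, whence $1\geq\E_P[E]\geq\E_{P'}[c]-\varepsilon$, and letting $\varepsilon\to 0$ gives $\E_{P'}[c]\leq 1$. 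Second, on $\{c>0\}$ the rescaled slice $E(x^{T-1},\,\cdot\,)/c(x^{T-1})$ lies in $\Ee_{\Hh_\mu}$, so \Cref{thm:main} supplies $\lambda_T(x^{T-1})\in I_\mu$ with $1+\lambda_T(x^{T-1})(x_T-\mu)\geq E(x^{T-1},x_T)/c(x^{T-1})$ for every $x_T\in\X$; on $\{c=0\}$, \Cref{lemma:F} forces $E(x^{T-1},\cdot)\equiv 0$, so any $\lambda_T(x^{T-1})\in I_\mu$ does. Third, by the inductive hypothesis applied to $c\in\Ee_\mu^{T-1}$, there is $(\lambda_1,\dots,\lambda_{T-1})\in\Lambda_\mu^{T-1}$ with $E_{\lambda^{T-1}}\succeq c$. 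Multiplying the two inequalities then yields $E_{\lambda^T}\succeq E$ pointwise on $\X^T$.

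For the maximality direction, I would observe that iterating the conditional mean constraint via the tower property shows $\E_P[E_{\lambda^T}]=1$ for every $E_{\lambda^T}\in\EcbT_\mu$ and every $P\in\Hh_\mu^T$ (the partial products form a $P$-martingale in the natural filtration). So any majoriser $\hat E\in\Ee_\mu^T$ of $E_{\lambda^T}$ must satisfy both $\E_P[\hat E-E_{\lambda^T}]\leq 0$ and $\hat E-E_{\lambda^T}\geq 0$, forcing equality $P$-almost surely. To lift this to equality on all of $\X^T$, I would fix arbitrary $x^T\in\X^T$ and take $P=P_{x_1}\otimes\cdots\otimes P_{x_T}$, with each $P_{x_t}\in\Hh_\mu^1$ the single-round measure supplied by \Cref{lemma:F} that places mass $1/F_\mu(x_t)>0$ on $x_t$. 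Independence makes $P\in\Hh_\mu^T$ automatic, and $P(\{x^T\})>0$ yields $\hat E(x^T)=E_{\lambda^T}(x^T)$.

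The main technical obstacle I anticipate is the measurable selection underpinning $c$ and $\lambda_T$: one needs both $x^{T-1}\mapsto c(x^{T-1})$ to be Borel measurable and $x^{T-1}\mapsto\lambda_T(x^{T-1})$ to be a Borel selection into $I_\mu$. I would handle both by invoking a standard measurable selection theorem (e.g., Jankov--von Neumann), exploiting that $\Hh_\mu^1$ is a Borel subset of $\PR_\X$ in the weak topology and that $(x^{T-1},Q)\mapsto\E_Q[E(x^{T-1},\,\cdot\,)]$ is jointly Borel by Fubini. With these measurability issues dispatched, the remainder of the argument is merely the algebraic combination of pointwise inequalities described above.
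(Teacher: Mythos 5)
Your overall architecture is sound and genuinely different from the paper's: you peel off the \emph{last} coordinate via the worst-case conditional expectation $c(x^{T-1})=\sup_{Q\in\Hh_\mu^1}\E_Q[E(x^{T-1},\cdot)]$ and recurse backwards, whereas the paper peels off the \emph{first} coordinate (finding $\lambda_1$ by re-running the two-point geometric argument of \Cref{fig:mu} at the level of $\sup_Q$ quantities, then rescaling the slices $E(x,\cdot)$) and in fact derives the general-$T$ majorising claim from the e-process result \Cref{thm:proc}. Your maximality argument is identical to the paper's. The algebraic core of your majorising step (rescaled slices are single-round e-variables, apply \Cref{thm:main} slice-wise, multiply by the inductive majorant of $c$) is correct as a pointwise statement.

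The genuine gap is the measurability, which you flag but do not resolve, and which is precisely where the paper spends its effort. Two distinct problems arise. First, $c$ is a supremum over the uncountable family $\Hh_\mu^1$ of a jointly Borel map, so it is only \emph{upper semi-analytic}, not Borel; hence it is not an e-variable in the sense of the paper's definition, and the inductive hypothesis (stated for Borel e-variables for $\Hh_\mu^{T-1}$) cannot be applied to it without further work. Second, Jankov--von Neumann yields only universally (analytically) measurable selections, whereas $\Lambda_\mu^T$ requires each $\lambda_t$ to be \emph{Borel}; the slice-wise $\lambda_T(x^{T-1})$ produced by \Cref{thm:main} has no a priori regularity in $x^{T-1}$, and a selection theorem will not upgrade it to Borel. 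The paper's resolution is structural rather than a selection theorem: it observes that the admissible values of $\lambda_T(x^{T-1})$ form an interval $[u(x^{T-1}),l(x^{T-1})]$ with $u$ upper semi-analytic and $l$ lower semi-analytic, and then interpolates a Borel function between them via a functional variant of Lusin's separation theorem (\Cref{prop:ufl}, together with \Cref{lemma:supinf} and a coarsening of the hypothesis to two-point conditional laws that keeps the relevant suprema semi-analytic). To salvage your induction you would need either this sandwiching device applied both to $\lambda_T$ and to a Borel e-variable majorant replacing $c$, or a restatement of the inductive hypothesis that tolerates upper semi-analytic objects; as written, the step ``apply the inductive hypothesis to $c\in\Ee_\mu^{T-1}$'' does not go through.
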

The proof of \Cref{thm:multi} shares many similarities with that of \Cref{thm:main}. In particular, we use the same technique to establish the maximality of e-variables of the form $E_{\lambda^T}$. A geometric argument analogous to the one illustrated in \Cref{fig:mu} can then be employed, as part of an induction recursion, to prove that the e-class of interest is majorising. However, the detailed proof is somewhat lengthy and involves some technical subtleties when dealing with measurability. We hence defer it to \ref{app:multi}.

\subsection{A remark on the i.i.d.~case} 
    From what we have established so far, coin-betting yields all and only the maximal e-variables when testing the $T$-round hypothesis $\Hh_\mu^T$ that the conditional mean is some fixed $\mu\in(0,1)$. Yet, this is not any more the case if we consider a more restrictive hypothesis, stating that the observations are i.i.d.~and with mean $\mu$. More concretely, let us define $$\hat\Hh_\mu^T = \big\{P=Q^{\otimes T}\,:\,Q\in\Hh_\mu\big\}\,$$
    and denote as $\hat\Ee_\mu^T$ the set of all the e-variables for $\hat\Hh_\mu^T$.
    Were $\EcbT_\mu$  to be the optimal e-class for $\hat\Hh_\mu^T$, then $\Ee_\mu^T$ and $\hat\Ee_\mu^T$ would have to coincide, as the optimal e-class completely determines the set of all the e-variables. However, this cannot be the case, unless $\X$ has only two elements. To avoid technicalities, let us consider the case where $\X$ has finite cardinality and has at least three elements. As $\X$ has finitely many elements, for any hypothesis $\Hh$, the largest (in an inclusion sense) hypothesis, whose e-variables are all and only the functions in $\Ee_\Hh$, is  precisely the closure of the closed convex hull of $\Hh$ \citep{larsson2024numeraire}. In particular, the optimality of $\EcbT_\mu$ for $\hat\Hh_\mu^T$ would imply that $\Hh_\mu^T$ is included in the convex hull of $\hat\Hh_\mu^T$, which is  not the case if $\X$ has at least three elements.\footnote{Note that if $\X=\{0,1\}$ has two elements only, then $\Hh_\mu^T$ and $\hat\Hh_\mu^T$ coincide.} It is however worth noticing that the e-variables in $\EcbT_\mu$ are still maximal for $\hat\Hh_\mu^T$. To see this, fix $E\in\EcbT_\mu$. We have that $E_P[E]=1$ for any $P\in\hat\Hh_\mu^T$. For every $x^T$, we can find a $P_{x^T}\in\hat\Hh_\mu^T$ such that $P_{x^T}(\{x^T\})>0$.\footnote{From \Cref{lemma:F} we know that for each $t\in[1:T]$ there is $Q_t\in\Hh_\mu$ such that $Q_t(\{x_t\})>0$. Let $Q=\frac{1}{T}\sum_{t=1}^TQ_t$. Then $P_{x^T} = Q^{\otimes T}\in\hat\Hh_\mu^T$ satisfies $P_{x^T}(\{x^T\})>0$.} By the same  argument that we used in the proof of \Cref{thm:main}, if an e-variable $\hat E$ majorises $E$, then for every $x^T$ we have $0\leq P_{x^T}(\{x^T\})(\hat E(x^T)-E(x^T))\leq \E_{P_{x^T}}[\hat E-E]=\E_{P_{x^T}}[\hat E]-1\leq 0$. So, $E=\hat E$, and $E$ is maximal.

    To give a concrete example of the mismatch between the e-variables for the conditional and independent scenarios, let us consider the ``highly symmetrical'' case  $\X = \{0,1/2,1\}$, with $\mu=1/2$ and $T=2$. We show in \ref{app:iid} that the e-variables in this setting are the  $E:\{0,1/2,0\}^2\to[0,+\infty)$  satisfying
    $$\xi_0\leq 1\,;\qquad\xi_1\leq 1+\sqrt{(1-\xi_0)(1-\xi_2)}\,;\qquad\xi_2\leq 1\,,$$ where $\xi_0 = E(1/2,1/2)$, $\xi_1 = (E(1,1/2)+E(1/2,1)+E(1/2,0)+E(0,1/2))/4$, and $\xi_2 = (E(1,1)+E(1,0)+E(0,1)+E(0,0))/4$. Clearly, the optimal e-class here consists of those e-variables that satisfy $\xi_0=\xi_1=\xi_2=1$, or $\xi_1= 1+\sqrt{(1-\xi_0)(1-\xi_2)}$ with $\xi_0<1$ and $\xi_2<1$. We note that all the e-variables in $\Ee_{1/2}^T$ satisfy $\xi_0=\xi_1=\xi_2=1$. As expected, these are indeed maximal e-variables for $\hat\Hh_{1/2}^2$. Yet, there are maximal e-variables for $\hat\Hh_{1/2}^2$ that are not in $\Ee_{1/2}^T$. The function that equals $4$ on $x^2= (1, 1/2)$, and $0$ everywhere else, is an example of a (maximal) e-variable for $\hat\Hh_{1/2}^2$, which is not an e-variable for $\Hh_{1/2}^2$.
    
    We leave as a (non-trivial) open question to characterise the e-variables for $\hat\Hh^T_\mu$ in the general case. 
\subsection{Optimal classes of e-processes}\label{sec:eproc}
A key advantage of many testing procedures involving e-variables is allowing early stopping. One way to achieve this is through the round-wise testing-by-betting approach that we presented in \Cref{sec:ame}, which gives rise to a super-martingale (cf.~\Cref{lemma:game}). However, as we have already pointed out, this is not the most general approach. Rather than only focusing on super-martingales, one can in principle use any non-negative process whose expectation is upper bounded by one, regardless the stopping time. These are known as e-processes, a generalization of super-martingales, which can enable powerful tests even in scenarios where martingale-based methods lack power \citep{ramdas22testing}.

We denote as $\X^\infty$ the space of sequences $(x_t)_{t\geq 1}\subseteq\X$. For convenience, we will often write $x^\infty$ for $(x_t)_{t\geq 1}$. We endow $\X^\infty$ with the product sigma-field generated by cylinder sets, and we denote as $\PR_{\X^\infty}$ the space of probability measures on $\X^\infty$. For a fixed $\mu\in(0,1)$, in this section we  focus on the hypothesis class
$$\Hh_\mu^\infty = \big\{P\in\PR_{\X^\infty}\,:\,\E_P[X_t|X^{t-1}]=\mu\,,\;\forall t\geq 1\big\}\,,\footnote{As usual $\E_P[X_t|X^{t-1}]=\mu$ holds $P$-almost everywhere.}$$
where again we use the convention $\E_P[X_1|X^0] = \E_P[X_1]$. 
This time, we will not aim to study the e-variables for $\Hh^\infty_\mu$, as these would be functions that take a whole sequence as argument. Conversely, we are interested in tools that allow us to stop the test if we think enough data-points have been observed to decide whether or not the hypothesis has to be rejected. To make this rigorous we first need some definitions. First, we define a \emph{finite stopping time} as a measurable\footnote{Here we endow $\Nn$ with the discrete sigma-field.} function $\tau:\X^\infty\to\Nn$ such that, for any $t\geq 0$ the set $\{\tau = t\}$ is measurable with respect to the sigma-field $\F_t$, generated by the projection on the first $t$ components of the sequences in $\X^\infty$. We let $\T$ denote the set of all finite stopping times. 
\begin{definition}
Let $E = (E_t)_{t\geq 0}$ be  a sequence  of non-negative Borel functions $E_t:\X^t\to[0,+\infty)$. We say that $E$ is an \emph{e-process} (for $\Hh_\mu^\infty$) when, for any $P\in\Hh^\infty_\mu$ and any $\tau\in\T$, $\E_P[E_\tau]\leq 1$.
\end{definition}
Sequential testing using e-processes proceeds as follows (see \citealt{ramdas2024hypothesis} for more details). Before observing any data, an e-process $E = (E_t)_{t\geq 0}$ for $\Hh_\mu^\infty$ is fixed.\footnote{Note that although the e-process  is fixed before observing any data, each $E_t$ is a function of the past observations $x^{t-1}$, allowing the test to adapt to the data as they become available.}
Then, at each round $t$, the player's wealth is defined as $\log E_t(x^t)$. $\Hh_\mu^\infty$ is rejected  if this wealth ever exceeds $\log  1/\delta$. The definition of an e-process ensures that this yields a sequential test with Type I error controlled at level $\delta$. We remark that this testing procedure extends the testing-by-betting framework of \Cref{def:game} to the more expressive setting where the null hypothesis takes into account the entire dependence structure of the data sequence. 

We denote as $\Ee_\mu^{\infty}$ the set of all e-processes for $\Hh_\mu^\infty$. An \emph{e-process class} is any subset of $\Ee_\mu^{\infty}$. Similarly to what we have done for the e-variables, we say that the e-process $E$ \emph{majorises} the e-process $E'$ (we write $E\succeq E'$) if for every $t\geq0$ we have $E_t\succeq E'_t$. If $E\succeq E'$ and there is a $t\geq0$ such that $E_t\succ E'_t$, then $E$ \emph{strictly} majorises $E'$, and we write $E\succ E'$. We call an e-process  \emph{maximal} if it is not strictly majorised by any other e-process. We say that an e-process class $\Ee$ is \emph{majorising} if, for every $E\in\Ee^{\infty}_\mu$, there is $E'\in\Ee$ such that $E\preceq E'$. We say that $\Ee$ is \emph{optimal} if it is majorising and all its elements are maximal. We remark that if an optimal majorising e-process class exists, then it is unique, it consists with the set of all the maximal e-processes, and it is included in every other majorising e-process class.

We let $\Lambda_\mu^\infty$ be the set of all sequences $\lambda^\infty = (\lambda_t)_{t\geq 1}$, with $\lambda_t:\X^{t-1}\to I_\mu$ Borel for all $t\geq 1$. For any $\lambda^\infty\in\Lambda_\mu^\infty$, we define $E^{\lambda^\infty} = (E^{\lambda^\infty}_T)_{T\geq 0}$ via $$E^{\lambda^\infty}_T(x^T) = E_{\lambda^T}(x^T) = \prod_{t=1}^T\big(1+\lambda_t(x^{t-1})(x_t-\mu)\big)\,,$$ for $T\geq 1$, and $E_0=1$. These are the sequences associated with the wealth of a coin-betting player. As $E^{\lambda^\infty}$ defines a martingale under $\Hh_\mu^\infty$, it is also an e-process for such hypothesis. We hence define the \emph{coin-betting e-process class} $$\Ecbinf_\mu = \{E^{\lambda^\infty}\,:\,\lambda^\infty\in\Lambda^\infty_\mu\}\,.$$ 

The next result shows that  the coin-betting approach is optimal at the level of e-processes for $\Hh_\mu^\infty$. The proof (see \ref{app:proc}) follows the ideas introduced in the proofs of \Cref{thm:main,thm:multi}.

\begin{theorem}\label{thm:proc}
    For any $\mu\in(0,1)$ the coin-betting e-process class is the optimal e-process class for $\Hh_\mu^\infty$.
\end{theorem}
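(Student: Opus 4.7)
The plan is to follow the two-part template of \Cref{thm:main,thm:multi}: first show that every element of $\Ecbinf_\mu$ is maximal, then show that $\Ecbinf_\mu$ majorises all of $\Ee_\mu^\infty$. Maximality is almost a direct repeat of the closing step of \Cref{thm:main}. Fix $\lambda^\infty \in \Lambda_\mu^\infty$ and suppose $\hat E$ is an e-process with $\hat E \succeq E^{\lambda^\infty}$. For any $T \geq 0$ and any $x^T \in \X^T$, I would build a product measure $P = \bigotimes_{t \geq 1} Q_t \in \Hh_\mu^\infty$ with $Q_t \in \Hh_\mu$ charging $\{x_t\}$ for $t \leq T$ (possible by \Cref{lemma:F}) and an arbitrary $Q_t \in \Hh_\mu$ otherwise. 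Under $P$ each factor of $E^{\lambda^\infty}$ has conditional mean one, so $\E_P[E^{\lambda^\infty}_T] = 1$. The constant stopping time $\tau \equiv T$ gives $\E_P[\hat E_T] \leq 1$, and combined with the pointwise majorisation this forces $\hat E_T(x^T) = E^{\lambda^\infty}_T(x^T)$, whence $\hat E = E^{\lambda^\infty}$.

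For the majorising part, given any $E = (E_t)_{t \geq 0} \in \Ee_\mu^\infty$, I would construct $\lambda^\infty$ by forward induction, though the object I would actually majorise is the Snell-type envelope
$$V_t(x^t) = \sup \big\{ \E_P[E_\tau \mid X^t = x^t] : P \in \Hh_\mu^\infty,\ P(\{X^t = x^t\}) > 0,\ \tau \in \T,\ \tau \geq t \big\}\,,$$
which pointwise dominates $E_t$ (take $\tau \equiv t$) and satisfies $V_0 \leq 1$ directly from the e-process inequality. A gluing argument, combining near-optimisers $(P_{x_t}, \tau_{x_t})$ of $V_t(x^{t-1}, x_t)$ across $x_t$ via an arbitrary $Q \in \Hh_\mu$ step at time $t$, shows that $\sup_{Q \in \Hh_\mu} \E_Q[V_t(x^{t-1}, X_t)] \leq V_{t-1}(x^{t-1})$. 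The induction then closes cleanly: assuming $V_{t-1}(x^{t-1}) \leq E^{\lambda^{t-1}}_{t-1}(x^{t-1})$, the slice $x_t \mapsto V_t(x^{t-1}, x_t)/E^{\lambda^{t-1}}_{t-1}(x^{t-1})$ is an e-variable for $\Hh_\mu$, so \Cref{thm:main} provides $\lambda_t(x^{t-1}) \in I_\mu$ making $E^{\lambda^t}_t \succeq V_t \succeq E_t$ on $\X^t$.

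The main obstacle will be the measurability bookkeeping around $V$. It is defined as a supremum over an uncountable family of pairs $(P, \tau)$, and establishing Borel measurability of $V_t$, as well as a Borel selection $x^{t-1} \mapsto \lambda_t(x^{t-1})$ at each step, is delicate. I expect the appendix to either restrict the sup to a countable separating subfamily (using that $\X$ is Borel and $I_\mu$ is a compact interval) or invoke a measurable selection theorem in the style of Kuratowski and Ryll-Nardzewski, paralleling and extending the measurability care already present in the proof of \Cref{thm:multi}.
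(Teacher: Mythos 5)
Your skeleton matches the paper's: maximality is proved exactly as you describe (a measure charging $\{x^T\}$ plus the constant stopping time $\tau\equiv T$), and the majorising half is a forward induction in which the object actually majorised is the optional-stopping envelope $\sup_{Q,\tau}\E_Q[E^{x^t}_\tau]$ — your $V_t$ is precisely the paper's notion of ``$\lambda^t$ dominates $E$ at level $t$'', and the reduction of each slice to a single-round coin-betting majorisation (via the two-point geometric argument of \Cref{fig:mu}) is the paper's \Cref{lemma:TQx}.

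The gap is that the two steps you wave at are where essentially all of the work lives, and your sketch of them would not go through as written. First, the gluing inequality $\sup_{Q\in\Hh_\mu}\E_Q[V_t(x^{t-1},X_t)]\leq V_{t-1}(x^{t-1})$ requires combining near-optimisers $(P_{x_t},\tau_{x_t})$ into a \emph{single} law in $\Hh_\mu^\infty$ and a \emph{single} stopping time; for a $Q$ with uncountable support this already needs a measurable selection of the near-optimisers, i.e.\ the very problem you defer reappears inside the step you use to set up the induction. The paper dissolves this by coarsening the null to $\bar\Hh_\mu^\infty$ (conditional laws supported on at most two points), observing that every e-process for $\Hh_\mu^\infty$ is one for $\bar\Hh_\mu^\infty$ and that the geometric argument only ever needs two-point gluings, so all pastings are finite. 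Second, the Borel selection of $x^{t-1}\mapsto\lambda_t(x^{t-1})$ is resolved neither by a countable separating subfamily nor by Kuratowski--Ryll-Nardzewski: the paper parametrises $\bar\Hh_\mu^T$ by a Borel set $\D_T$ and stopping times by finitely many tree ``masks'', shows the upper and lower envelopes bracketing the admissible bets are upper, respectively lower, semi-analytic (\Cref{lemma:trees}), and then sandwiches a Borel $\lambda_t$ between them via a functional form of Lusin's separation theorem (\Cref{prop:ufl}). Without some such device your induction does not produce a measurable $\lambda^\infty$, so $E^{\lambda^\infty}$ is not a legitimate element of $\Ecbinf_\mu$.
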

\section{Perspectives}\label{sec:perspectives}
\Cref{thm:main} gives a rigorous sense to the claim that the coin-betting formulation is optimal among the e-variable-based approaches to test the mean and build confidence sequences given a sequence of independent draws from an unknown supported on $[0,1]$. \Cref{thm:multi,thm:proc}  extend this optimality result when testing the hypothesis of a fixed conditional mean. To formalise these claims, we introduced the notions of majorising and optimal e-classes, which may be of independent interest in the context of sequential testing, beyond the scope of this paper. The main novelty of these concepts lies in defining ``optimality'' in terms of e-classes, rather than individual e-variables. This perspective contrasts with the notion of \emph{log-optimality} of a single e-variable with respect to an alternative hypothesis, widely discussed in the literature (e.g., \citealp{koolen2022log,grunwald2024safe, larsson2024numeraire}). Notably, log-optimality is defined in terms of an alternative hypothesis, against which the null is tested. In such setting there can be indeed a single (up to null sets under the alternative hypothesis) \emph{best} e-variable. However, here we adopt a different perspective, where no alternative is defined. It is not hard to see that if the optimal e-class exists, whenever one considers an alternative that allows to define a log-optimal e-variable, a ``version''\footnote{As the log-optimal e-variable is defined up to null sets under the alternative, there might be elements of this family of e-variables that are not maximal. However, there is always a maximal e-variable among them if the optimal e-class exists.} of this e-variable must lie in the optimal e-class. Interestingly, in the case of the coin-betting e-class, for each $E\in\Ecb_\mu$ one can find an alternative hypothesis such that $E$ is log-optimal.\footnote{Specifically, Theorem 1 in \cite{grunwald2024safe} implies that, for $\lambda\in I_\mu$, $E_\lambda$ is log-optimal for the point alternative $\{Q\}$, with $Q = (1-\mu)E(0)\delta_0 + \mu E(1)\delta_1$, since $E_\lambda = \dd Q/\dd P$ on the support of $Q$, where $P = (1-\mu)\delta_0 + \mu\delta_1$ is in $\Hh_\mu$.}

The concept of maximality for e-variables, as introduced in this paper, is essentially equivalent to the classical statistical notion of admissibility. An e-variable is considered maximal if no other e-variable strictly dominates it. Likewise, the idea of a majorising e-class corresponds to the notion of a complete class, with the optimal e-class representing the minimal complete class of e-variables. In this sense, the problem of identifying the optimal e-class can be seen as an instance of characterising the minimal complete class of tests, a question largely studied in classical statistics. For a detailed discussion of these ideas in traditional statistics we refer to \cite{lehmann2005testing}. In this work, however, we adopt terminology from partially ordered set (poset) theory to highlight the fact that with e-variables these properties follows from the standard dominance partial ordering among functions. 

The concept of admissibility in the context of e-variables and e-processes was previously introduced by \cite{ramdas2022admissible}, whose definition is closely aligned with our notion of maximality. However, a key distinction lies in the type of dominance considered: their framework often relies on almost sure dominance, whereas our definition of maximality requires everywhere dominance. This stricter requirement is motivated by the will of developing a theory that remains valid even in the absence of a known alternative, where any value in $\X$ might occur at the next observation. Although \cite{ramdas2022admissible} provide some necessary and sufficient conditions for admissibility, we emphasise that their results are insufficient to establish the optimality of the coin-betting e-class. Specifically, their necessary conditions assume the existence of a reference measure for the considered hypothesis,\footnote{A hypothesis $\Hh$ admits a "reference measure" if there exists a Borel measure $Q$ such that $P \ll Q$ for all $P \in \Hh$.} a requirement not satisfied by $\Hh_\mu$. More precisely, their findings imply that each individual e-variable in the coin-betting e-class is maximal, but not that $\Ecb_\mu$ forms a majorising e-class. This distinction, though subtle, is crucial: proving the optimality of coin-betting requires analyzing the collective properties of a set of e-variables, rather than evaluating them in isolation. It is precisely the fact that $\Ecb_\mu$ is a majorising e-class that guarantees that nothing is lost by relying on the coin-betting approach. 

To the author's knowledge, this is the first work to rigorously examine the optimality of the coin-betting formulation, as previous discussion on the topic has relied solely on heuristic arguments. For instance, \cite{waudbysmith23estimating} justify restricting to the coin-betting e-class by noting that $\Ecb_\mu$ is precisely the set $\bar\Ee_{\Hh_\mu}$ of e-variables whose expectation equals $1$ under every $P\in\Hh_\mu$. However, in general this property (often called \emph{exactness}) merely implies that all the elements in the e-class are maximal, and not its optimality. As a simple counterexample, consider $\Hh'_\mu=\{P\in\PR\,:\,\langle P, X\rangle\leq\mu\}$. Then, $\bar\Ee_{\Hh'_\mu}=\{1\}$, while the optimal e-class exists and consists of all functions in the form $E_\beta:x\mapsto1+\beta(x-\mu)$, with $\beta\in[0, \mu^{-1}]$ (see \citealp{clerico24optimal}). Although one might argue that for each maximal e-variable there is at least one measure that brings the expectation to $1$ (\citealt{grunwald2024beyond} calls such property \emph{sharpness}), this feature alone is not enough to ensure maximality ($\one{\mu}$, the function equal to $1$ on $\mu$ and $0$ elsewhere, is a sharp e-variable for $\Hh_\mu$, but it is clearly not maximal). 

Two papers \citep{clerico24optimal, larsson2025evariables} have appeared after the first preprint of this work, both  \emph{characterising} single-round e-variables for hypotheses defined via linear constraints, a framework that includes the testing for the mean via single-round e-variables as a special case. Their results directly imply our \Cref{thm:main} (but not \Cref{thm:multi,thm:proc})\footnote{Interestingly, one could see the multi-round hypothesis $\Hh_\mu^T$ as a linearly constrained hypothesis, in the sense of \cite{larsson2025evariables}. However, it does not seem to be trivial to directly derive \Cref{thm:multi} from their results.}. However, their analyses rely on considerably more abstract and technically advanced proof techniques. \cite{clerico24optimal} first proves results for finite domains and then extends them to the uncountable case by compactness and density arguments, while \cite{larsson2024numeraire} leverages and extends powerful duality tools from the theory of functional lattices. Conversely, a key strength of the present paper is to provide a simple and neat argument (illustrated in \Cref{fig:mu}) that works well in the simple setting considered.  Interestingly, this same argument is also at the base of the proofs of \Cref{thm:multi} and \Cref{thm:proc}. We  remark that a similar strategy was adopted in the proof of Lemma 2 in \cite{wang2025only}. 

Another interesting connection with this paper's approach and the literature is recent work on admissible merging for e-variables \citep{vovk24merging, wang2025only}.  A thorough exploration of these connections is an interesting avenue for future research. In particular, \cite{wang2025only} implies that the coin-betting single-round e-variables cannot be majorised by any e-variable that is a monotonic function. However, the current paper approach relies on this monotonicity assumption, and cannot hence imply directly \Cref{thm:main}. On the other hand,
\cite{vovk24merging} considered conditional sequential hypotheses, of which conditional mean testing is a special case. Although their framework adopts a slightly different perspective, some of their results may imply, or be equivalent to, a  weaker version of \Cref{thm:proc}, where a finite time horizon $T$ is fixed and the e-processes are defined as vectors $(E_t)_{t\in[0:T]}$, rather than sequences. As our proof techniques follow a different route compared to these two works, we believe our findings offer an alternative perspective that contribute to a broader understanding of the problem and could potentially help generalise or strengthen their results.

To conclude, we remark that the present paper does not aim to delve into the design of effective coin-betting strategies. For this, we refer the interested reader to the thorough analysis and discussion by \cite{orabona2023tight} and \cite{waudbysmith23estimating}. However, let us stress once more that characterising the optimal e-class simplifies the taks of designing such strageties, by provides the minimal set where e-values shall be picked. We discussed at the end of \Cref{sec:conf} an explicit example where this can be turned into a practical advantage, namely when using a Bayesian aggregation strategy that places a prior over the e-class considered. The tractable, parametric form of the coin-betting e-class makes such approach feasible, while its optimality ensures that no statistical power is lost.

As a final comment, although we have focused on random variables taking values in $\X\subseteq [0,1]$ for clarity of exposition, most results presented can be easily extended to any bounded closed real set. 

\section*{Acknowledgements} 
The author would like to thank Gergely Neu, Peter D.~Gr\"unwald, Nishant A.~Mehta, Hamish E.~Flynn, and Claudia M.~Chanu, for the insightful discussions that inspired this work. I would also like to thank Nick W.~Koning and Aditya Ramdas for their constructive feedback, which helped improve this work following its initial appearance, and Wouter M.~Koolen for helpful discussions on e-processes. Finally, I thank the anonymous reviewers for the insightful comments and feedback that helped improving this work. GPT-4o was used during the redaction of this paper to polish the presentation. All AI-generated text was reviewed and edited by the author, who takes full responsibility for the content of this manuscript. This project was funded by the European Research Council (ERC), under the European Union’s Horizon 2020 research and innovation programme (grant agreement 950180). 
\bibliography{bib}

\newpage
\appendix
\section{}
\subsection[Proof of Proposition 2]{Proof of \Cref{prop:hoef}}\label{app:hoef}
\begin{proof}
For the first claim, without loss of generality we let $\X=[0,1]$, which implis the desired result for any  $\X$.  For any $\alpha\in\R$, let $S_\alpha:x\mapsto \EHa(0) + (\EHa(1)-\EHa(0))x$ be the straight line that intersects $\EHa$ at $x=0$ and $x=1$. Since $\EHa$ is a convex function on $[0,1]$, we have  $\EHa(x)\leq S_\alpha(x)$ for all $x\in[0,1]$. Let us show that $E_{\lambda_\alpha}(x)\geq S_\alpha(x)$ for all $x$. As $E_{\lambda_\alpha}$ and $S_\alpha$ represents parallel straight lines, it suffices to show inequality for a single point, say $x=\mu$ where $E_{\lambda_\alpha}$ equals $1$. 
Hence, it is enough to prove that $1+(e^\alpha-1)\mu \leq e^{\alpha\mu+\alpha^2/8}$. As $\mu\in(0,1)$, we always have $1+(e^\alpha-1)\mu>0$, and we can define $u:\R\to\R$ as
$$u(\alpha) = \frac{\alpha^2}{8}+\mu\alpha - \log\big(1+\mu(e^\alpha-1)\big)\,.$$
Clearly, if  $u$ is non-negative, then $1+(e^\alpha-1)\mu \leq e^{\alpha\mu+\alpha^2/8}$ for all $\alpha\in\R$, and so we obtain the desired claim. First, note that $u$ is twice differentiable, and we can explicitly compute its first and second derivatives:
$$    u'(\alpha) = \frac{\alpha}4+\mu-\frac{\mu e^\alpha}{1+\mu(e^\alpha-1)}\quad\text{ and }\quad
    u''(\alpha) = \frac{1}{4} -\frac{\mu e^\alpha}{1+\mu(e^\alpha-1)}\left(1-\frac{\mu e^\alpha}{1+\mu(e^\alpha-1)}\right)\,.
$$
For any $\xi\in\R$ we have $\xi(1-\xi)\leq 1/4$, so $u''(\alpha)\geq 0$ for all $\alpha$, and $u$ is convex. Moreover $u'(0) = 0$ and $u(0)=0$, so that $0$ is the minimum of $u$, which must then be non-negative.

For the second claim, fix any $\lambda\neq 0$. If $\alpha\neq 0$, then $\EHa(\mu) = e^{-\alpha^2/8}<1 = E_\lambda(\mu)$. If $\alpha=0$, then $\EHa$ is identically equal to $1$, and $\max(E_\lambda(0),E_\lambda(1))>1$, so we conclude.\qed
\end{proof}
\subsection[Characterising the e-variables for hat H2(1/2) with X=(0,1/2,1)]{Characterising the e-variables for $\hat\Hh_{1/2}^2$ with $\X=\{0,1/2,1\}$}\label{app:iid}
Let $\X=\{0,1/2,1\}$, and $\mu=1/2$. Let $Q\in\Hh_{1/2}^1$ and let $q=Q(\{1/2\})$. The mean constraint reads $Q(\{1\}) + q/2 = 1/2$, yielding $Q(\{1\}) = (1-q)/2$. The normalisation yields $Q(\{0\}) = 1-q-(1-q)/2 = (1-q)/2$. This shows that there is a one-to-one correspondence between $[0,1]$ and $\Hh_{1/2}^1$. Now, since by definition there is a one-to-one correspondence between $\Hh_{1/2}^1$ and $\hat\Hh_{1/2}^2$, we obtain that we can parametrise $\hat\Hh_{1/2}^2$ by $[0,1]$. For $q\in[0,1]$, we hence let $P_q$ be the (unique) element of $\Hh_{1/2}^2$ that gives mass $q^2$ to $(1/2,1/2)$. 

Let $E:\X^2\to[0,+\infty)$ be a non-negative function. We have that
$$\E_{P_q}[E] = q^2\xi_0 + 2q(1-q)\xi_1+(1-q)^2\xi_2 = (\xi_0+\xi_2-2\xi_1)q^2 - 2(\xi_0-\xi_1)q + \xi_2\,,$$
where $\xi_0 = E(1/2,1/2)$, $\xi_1 = (E(1,1/2)+E(1/2,1)+E(1/2,0)+E(0,1/2))/4$, and $\xi_2 = (E(1,1)+E(1,0)+E(0,1)+E(0,0))/4$. Now, $E$ is in $\hat\Ee_{1/2}^2$ if, and only if,
$$\max_{q\in[0,1]}\big((\xi_0+\xi_2-2\xi_1)q^2-2(\xi_0-\xi_1)q+\xi_2\big)\leq 1\,.$$ In particular, checking for $q=0$ and $q=1$ implies that $\xi_0\leq 1$ and $\xi_2\leq 1$. If both $\xi_0$ and $\xi_2$ are equal to $1$, then the constraint on $\xi_1$ becomes $\max_{q\in[0,1]}(1-\xi_1)(q^2 - q)\leq 0$, which implies $\xi_1\leq 1$. We are left to check whether $\xi_1$ can be larger than $1$ when at least one among $\xi_0$ and $\xi_2$ is strictly smaller than $1$. In such case, we note that we have $\xi_2-\xi_1<0$ and $\xi_0-\xi_1<0$. This implies that the parabola is concave and achieve its maximum at $q^\star = (\xi_0-\xi_1)/(\xi_0-\xi_1 + \xi_2-\xi_1)\in(0,1)$. The maximum is equal to $\xi_2(\xi_1-\xi_0)^2/(2\xi_1-\xi_0-\xi_2)$. Asking that this quantity is less than one reduces to the constraint $\xi_1\leq1+\sqrt{(1-\xi_0)(1-\xi_2)}$.

\subsection[Proof of Theorem 2]{Proof of \Cref{thm:multi}}\label{app:multi}
\begin{proof}
    First let us show that all the e-variables in $\EcbT_\mu$ are maximal. This can be proved essentially with the same argument we used for \Cref{thm:main}. Specifically, fix $E\in\mathcal E^{\mathrm{cb},T}_\mu$ and let $E'\succeq E$ be an e-variable for $\Hh_\mu^T$. Fix any $x^T\in\X^T$. It is easy to see that there is $P_{x^T}\in\Hh_\mu^T$ such that $P_{x^T}(\{x^T\})>0$. Then, $0\leq P_{x^T}(\{x^T\})( E'(x^T)-E(x^T) )\leq \E_{P_{x^T}}[E'-E] = \E_{P_{x^T}}[E']-1\leq 0$. Hence, $E'(x^T) = E(x^T)$, so $E=E'$, since $x^T$ was arbitrary, and $E$ is maximal. 

    The fact that $\EcbT_\mu$ is a majorising e-class follows directly from the more general \Cref{thm:proc}. Indeed, let $E\in\Ee_\mu^T$. Then we can consider an e-process $\tilde E = (\tilde E_t)_{t\geq 0}\in\Ee_\mu^\infty$, with $\tilde E_T = E$, and $\tilde E_t = 0$ for $t\neq 0$. By \Cref{thm:proc}, there is $\lambda^\infty$ such that $E^{\lambda^\infty}\succeq \tilde E$. In particular $E = \tilde E_T \preceq E_{\lambda^T}\in\EcbT_\mu$, which show that $\EcbT_\mu$ is a majorising e-class.  However, since the general proof of \Cref{thm:proc} is rather technical, we provide a detailed argument for \Cref{thm:multi} in the case $T=2$ as a simplified example, which might help build intuition for the proof of \Cref{thm:proc}.

    As we have already shown that all the e-variables in $\mathcal E^{\mathrm{cb},2}$ are maximal, we are left with checking that $\mathcal E^{\mathrm{cb},2}$ is a majorising e-class. So, fix an e-variable $E$ for $\Hh_\mu^2$. We need to show that there is a $\lambda^2\in\Lambda^2_\mu$ such that $E_{\lambda^2}\succeq E$. We start by showing that there is a $\lambda_1\in I_\mu$ such that, for any $Q\in\Hh^1_\mu$ and $x\in\X$, $$\E_Q[E(x, X)]\leq 1+\lambda_1(x-\mu)\,.$$ For this we will essentially use the idea that was at the core of the proof of \Cref{thm:main}. For any $Q$ and $x\geq\mu$, we can define $V_{x,Q} = P_x\otimes Q$, where $P_x = \frac{\mu}{x}\delta_{x} + (1-\frac{\mu}{x})\delta_0$. Then, $V_{x,Q}\in\Hh_{\mu}^2$. We have that $1\geq \E_{V_{x,Q}}[E]= \tfrac{\mu}{x}\E_Q[E(x, X)]+ \big(1-\tfrac{\mu}{x}\big)\E_Q[E(0, X)]$, and so $\E_Q[E(x, X)]\leq x/\mu = F_\mu(x)$,  with $F_\mu$  as in \Cref{lemma:F}. A similar argument can be used to show that $\E_Q[E(x, X)]\leq F_\mu(x)$ is true also when $x\leq\mu$. In particular, the following two sets are non-empty:
    \begin{align*}
        &B_0 = \big\{\beta\in I_\mu\,:\,\forall x\in\X\cap[0,\mu)\,,\,\forall Q\in\Hh_\mu^{1}\,,\,\E_Q[E(x,X)]\leq 1+\beta (x-\mu)\big\}\,;\\
        &B_1 = \big\{\beta\in I_\mu\,:\,\forall x\in\X\cap(\mu,1]\,,\,\forall Q\in\Hh_\mu^{1}\,,\,\E_Q[E(x,X)]\leq 1+\beta (x-\mu)\big\}\,.
    \end{align*} 
    $B_0$ and $B_1$ are intersections of closed intervals (one per each allowed $x$ and $Q$), and as such they must be closed intervals. We can find $\beta_0$ and $\beta_1$ such that 
    $B_0 = [(\mu-1)^{-1},\beta_0]$ and $B_1 = [\beta_1, \mu^{-1}]$. We will show that $\beta_0\geq\beta_1$, and hence that $B_0\cap B_1\neq\varnothing$. The argument is essentially the same one that was depicted in \Cref{fig:mu}. Assume that $\beta_0<\beta_1$, and let $\beta^\star\in(\beta_0,\beta_1)$. As $\beta^\star\notin B_0$, there is $Q_0\in\Hh^{1}_\mu$ and $u_0<\mu$ in $\X$ such that $\E_{Q_0}[E(u_0, X)]> 1+\beta^\star (u_0-\mu)$. Similarly, there must be $Q_1\in\Hh_\mu^{1}$ and $u_1>\mu$ in $\X$, such that $\E_{Q_1}[E(u_1, X)]> 1 + \beta^\star (u_1-\mu)$, since $\beta^\star\notin B_1$. Let $V = \frac{u_1-\mu}{u_1-u_0}\delta_{u_0}\otimes Q_0 + \frac{\mu-u_0}{u_1-u_0}\delta_{u_1}\otimes Q_1$. Then, $ V\in\Hh^2_\mu$. Moreover,
    $$\E_{V}[E] >\tfrac{u_1-\mu}{u_1-u_0}\big(1+\beta^\star (u_0-\mu)\big) + \tfrac{\mu-u_0}{u_1-u_0}\big(1+\beta^\star (u_1-\mu)\big)=1\,,$$ which is a contradiction since $E\in\Ee^2_\mu$. We have thus established that $B_0\cap B_1$ is non-empty, and in particular there is $\lambda_1\in I_\mu$ such that, for every $x\in\X$ and every $Q\in\Hh_\mu^{1}$, $\E_Q[E(x, X)]\leq 1 + \lambda_1 (x-\mu)$.
 
Now, fix any $x\in\X$ such that $1+\lambda_1(x-\mu)\neq 0$ and define the function $E^x:\X\to\R$ as $$E^x:y\mapsto \frac{E(x,y)}{1+\lambda_1(x-\mu)}\,.$$ Clearly $E^x$ is non-negative, and what we have shown above implies that $\E_Q[E^x]\leq 1$ for every $Q\in\Hh^1_\mu$. So, $E^x\in\Ee_\mu^1$, and there is $\hat\lambda_2^x\in I_\mu$ such that $$E(x,y)\leq \big(1+\lambda_1(x-\mu)\big)\big(1+\hat\lambda_2^x(y-\mu)\big)$$ for every $y\in\X$. On the other hand, if $x\in\X$ is such that $1+\lambda_1(x-\mu) = 0$, we have that, for every $Q\in\Hh_\mu^1$, $\E_Q[E(x,X)] = 0$. Clearly, this implies that $E(x,y)=0$ for every $y\in\X$. Hence, we have that, for every $(x,y)\in\X^2$, $$E(x, y)\leq \big(1+\lambda_1(x-\mu)\big)\big(1+\hat\lambda_2(x)(y-\mu)\big)\,,$$ where we defined $\hat\lambda_2:\X\to I_\mu$ as $\hat\lambda_2(x) = \hat\lambda_2^x$ if $1+\lambda_1(x-\mu)\neq 0$, and $\hat\lambda_2(x)=0$ otherwise.

Although the above inequality looks exactly like what we are looking for, we are not done yet, as nothing ensures that $\hat\lambda_2$ is a Borel function. To show the existence of a $\lambda_2:\X\to I_\mu$ that is Borel and such that $E\preceq E_{\lambda^2}$ we will use a functional separation theorem that can be derived from Lusin's separation theorem (see \ref{app:lusin}). First, let us define $\mathcal S=\{x\in\X\,:\,1+\lambda_1(x-\mu)\neq 0\}$, which is clearly a Borel set. We define the functions $u$ and $l$, from $\mathcal S$ to $\R$, as 
    \begin{align*}
    	u(x) &=\sup_{y\in\X\cap(\mu,1]}\frac{1}{y-\mu}\left(\frac{E(x,y)}{1+\lambda_1(x-\mu)}-1\right)\,;\\
   	l(x) &= \inf_{y\in\X\cap[0,\mu)}\frac{1}{\mu-y}\left(1-\frac{E(x,y)}{1+\lambda_1(x-\mu)}\right)\,.
    \end{align*}
    It is straightforward to check that, for any $x\in\mathcal S$, 
    $$(\mu-1)^{-1}\leq u(x)\leq \hat\lambda_2(x)\leq l(x)\leq \mu^{-1}\,.$$   
    Now, $u$ is the supremum on $y$ of the Borel mapping $(x,y)\mapsto\frac1{y-\mu}\big(\frac{E(x,y)}{1+\lambda_1(x-\mu)}-1\big)$. Hence, it is upper semi-analytic by \Cref{lemma:supinf}. Similarly, $l$ is lower semi-analytic as an infimum. In particular, by \Cref{prop:ufl}, there is a Borel mapping $\lambda_2:\mathcal S\to I_\mu$ such that $$u(x)\leq \lambda_2(x)\leq l(x)$$ for all $x\in\mathcal S$. We can extend $\lambda_2$ to the whole $\X$ by letting $\lambda_2(x)=0$ if $x\in\X\setminus\mathcal S$. This extension is still a Borel function since $\mathcal S$ is Borel. It is now straightforward to check that, for any $(x,y)\in\X^2$ we have
    $$E(x,y)\leq \big(1+\lambda_1(x-\mu)\big)\big(1+\lambda_2(x)(y-\mu)\big) = E_{\lambda^2}(x,y)\,,$$
    which concludes the proof for the case $T=2$.\qed
\end{proof}

\subsection[Proof of Theorem 3]{Proof of \Cref{thm:proc}}\label{app:proc}
The proof of \Cref{thm:proc} involves a few technicalities, mainly in order to solve issues linked to Borel measurability. To handle this we will consider ``simplified'' versions of $\Hh_\mu$, $\Hh_\mu^T$, and $\Hh_\mu^\infty$.
\subsubsection{Coarsening of the hypothesis}\label{app:coarse}
First, we define a coarser version of $\Hh_\mu$. More precisely, we let $\bar\Hh_\mu$ denote the set of probability measures in $\Hh_\mu$ whose support has at most two elements. We define as $\D$ the set 
$$\D = \{(a,a')\in\X^2\,:\, \mu\in[a,a']\text{ or }\mu\in[a',a]\}\,.$$
Define the non-negative function $W:\D\to[0,1]$ as 
\begin{equation}\label{eq:W}W(a_1, a_2) = \frac{a_2-\mu}{a_2-a_1}\,,\end{equation} here adopting the convention that $0/0=1$.
For $d=(a_1,a_2)\in\D$, there is exactly one measure $Q_{d}\in\bar\Hh_\mu$ that has support in $\{a_1,a_2\}$. This is 
$$Q_d = W(a_1, a_2)\delta_{a_1} + (1-W(a_1, a_2))\delta_{a_2}\,.$$ Clearly, the mapping $d\mapsto Q_d$ is surjective on $\bar\Hh_\mu$. Moreover, for any Borel $f:\X\to\R$, it is straighforward to check that the mapping $d\mapsto \E_{Q_d}[f]$ is Borel from $\D$ to $\R$.

We can extend these ideas to sequential hypotheses. We define $\bar\Hh_\mu^T$ as a subset of $\Hh_\mu^T$ consisting of measures that are built by iterating over $T$ time steps the 2-point construction that we used for $\bar\Hh_\mu$. More precisely, $Q \in\Hh_\mu$ is in $ \bar\Hh_\mu^T$ if for $X^T\sim Q$ the marginal $X_1$ is in $\bar\Hh_\mu$, and, for every $t \in [2:T]$, the conditional distribution of $X_t$ under $Q$, given $X^{t-1}$, belongs to $\bar\Hh_\mu$ for $Q$-almost every $X^{t-1}$. In particular, the support of $Q$ lies on at most $2^T$ trajectories in $\X^T$, and each branching step preserves the conditional mean constraint $\E_Q[X_t| X^{t-1}] = \mu$. This defines a  subset of $\Hh_\mu^T$ where every conditional law is supported on at most two points. We can of course extend the whole construction to sequences, and again consider the set $\bar\Hh_\mu^\infty\subseteq\Hh_\mu^\infty$ of the sequences such that $X_t$, conditioned on $X^{t-1}$, has mean $\mu$ and support on at most two elements. 

To make things more explicit, consider the case $T = 3$. Fix $(a_1, a_2) \in \D$, $b^4 =(b_1,b_2, b_3, b_4) \in \D^2$ (namely, $(b_1,b_2)\in\D$ and $(b_3,b_4)\in\D$), and $c^8 \in \D^4$. Then, given this tuple $d=(a^2, b^4, c^8) \in \D^{7}$, we can define a measure $Q_d \in \bar\Hh_\mu^3$ supported on the eight leaves of the tree shown in \Cref{fig:tree}. Each level of the tree corresponds to a time step. At the root, we begin by choosing among $a_1$ and $a_2$, by determining the value of $X_1$ (whose support under $Q_d$ is in $\{a_1, a_2\}$). Then, depending on whether we got $a_1$ or $a_2$, we branch using $(b_1, b_2)$ or $(b_3, b_4)$, which gives $X_2$. Finally, again depending on the previous choices, $Q_d$ gives different options for $X_3$, encoded in the branches leading to the leaves. The weights that $Q_d$ gives to each leaf is univocally determined by the constraint that the conditional means have to be equal to $\mu$. For instance, the mass that $Q_d$ assigns to the point $(a_1, b_2, c_3)$ is given by
$$
Q_d(\{a_1, b_2, c_3\}) = W(a_1,a_2) \times (1-W(b_1,b_2)) \times W(c_3,c_4)\,.
$$
Proceeding in this way, we can build a surjection from $\D^7$ to $\bar\Hh_\mu^3$.
\begin{figure}[t!]
        \centering
        \begin{tikzpicture}[
  grow=right,
  sibling distance=8mm,
  level distance=20mm,
  edge from parent/.style={draw}
]
  \node {$\bullet$}
    child { node {$a_2$}
      child { node {$a_2, b_4$}
        child { node {$a_2, b_4, c_8$} }
        child { node {$a_2, b_4, c_7$} }
      }
      child [missing]
      child { node {$a_2, b_3$}
        child { node {$a_2, b_3, c_6$} }
        child { node {$a_2, b_3, c_5$} }
      }
    }
    child [missing]
    child [missing]
    child [missing]
    child { node {$a_1$}
      child { node {$a_1, b_2$}
        child { node {$a_1, b_2, c_4$} }
        child { node {$a_1, b_2, c_3$} }
      }
      child [missing]
      child { node {$a_1, b_1$}
        child { node {$a_1, b_1, c_2$} }
        child { node {$a_1, b_1, c_1$} }
      }
    };
\end{tikzpicture}
        \caption{Tree representation associated to $Q_d$ for $T=3$, where $d=(a^2, b^4, c^8)$.}
        \label{fig:tree}
    \end{figure}

More generally, we can proceed analogously and see that every $d\in\D^{2^T-1}$ defines a unique $Q_d\in\bar\Hh_\mu^T$. For convenience, we henceforth denote $\D^{2^T-1}$ as $\D_T$. We have hence constructed a surjection from $\D_T$ to $\bar\Hh^T_\mu$, mapping $d$ to $Q_d$. It is clear from its definition that $\D_T$ is a Borel subset of $\X^{2(2^{T} - 1)}$. Moreover, it is easy to check that, for any fixed Borel function $f : \X^T \to \mathbb{R}$, $d \mapsto \mathbb{E}_{Q_d}[f]$ is a Borel measurable map from $\D_T$ to $\R$, which directly follows from the Borel measurability of $W$.

We remark that, by definition, $\bar\Hh_\mu\subseteq\Hh_\mu$. In particular, all the e-variables for $\Hh_\mu$ are also e-variables for $\bar\Hh_\mu$. An equivalent conclusion holds for the e-variables for $\Hh_\mu^T$ and the e-processes for $\Hh_\mu^\infty$. Thus, if we show that $\Ecbinf_\mu$ is a majorising e-process class for $\bar\Hh_\mu^\infty$, this will automatically imply that it is a majorising e-process class for $\Hh_\mu^\infty$. This is indeed the strategy that we will follow in the proof of \Cref{thm:proc}. The technical reason that required us to introduce this coarsening of the hypothesis, is that to deal with Borel measurability we will follow an approach similar to what done in the proof of \Cref{thm:multi} for $T=2$. In particular, we will apply \Cref{prop:ufl}, which tells us that we can always find a Borel function sandwiched between an upper and a lower semi-analytic ones (see \ref{app:lusin}). To follow this route, we will need the following technical result, whose proof is deferred to \ref{app:trees}.
\begin{lemma}\label{lemma:trees}
Let $f = (f_s)_{s\geq 1}$ denote a sequence of bounded Borel functions, with $f_s:\X^s\to\R$. Fix $t\geq 1$ and, for any $x^t\in\X^t$ and $s\geq0$, let $f_s^{x^t}:\X^s\to\R$ be defined via $f_s^{x^t}(y^s) = f_{t+s}(x^t,y^s)$. Define the functions $u$ and $l$, from $\X^t\to\R$, as
$$
    u(x^t) = \sup_{Q\in\bar\Hh^\infty_\mu}\sup_{\tau\in\T}\E_Q[f_\tau^{x^t}]\quad\text{ and }\quad
    l(x^t) = \inf_{Q\in\bar\Hh^\infty_\mu}\inf_{\tau\in\T}\E_Q[f_\tau^{x^t}]\,.
$$
Then, $u$ is upper semi-analytic and $l$ is lower semi-analytic.
\end{lemma}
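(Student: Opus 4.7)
My plan is to truncate the stopping horizon and then, at each truncation level $N$, express $u_N$ and $l_N$ as a finite $\max/\min$ of Borel projections onto the Borel parameter space $\D_N$ coming from the tree representation of $\bar\Hh_\mu^\infty$ established in \ref{app:coarse}; semi-analyticity then falls out of standard projection facts.

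For the truncation, I would set, for $N\geq 0$,
$$u_N(x^t) = \sup\big\{\E_Q[f_\tau^{x^t}] : Q \in \bar\Hh_\mu^\infty,\; \tau \in \T,\; \tau \leq N\big\}\,,$$
and define $l_N$ as the analogous infimum. Since $\tau \wedge N \in \T$ and $\tau \wedge N \to \tau$ pointwise for every $\tau \in \T$ (all stopping times in $\T$ being finite), boundedness of $f$ and dominated convergence give $\E_Q[f_{\tau \wedge N}^{x^t}] \to \E_Q[f_\tau^{x^t}]$. Monotonicity of the sequences $u_N$ and $l_N$ then yields $u = \sup_N u_N$ and $l = \inf_N l_N$. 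Countable suprema preserve upper semi-analyticity and countable infima preserve lower semi-analyticity, so it suffices to prove each $u_N$ is upper semi-analytic and each $l_N$ lower semi-analytic.

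Fix $N$. The expectation $\E_Q[f_\tau^{x^t}]$ with $\tau \leq N$ depends on $Q$ only through its marginal on $\X^N$, which lies in $\bar\Hh_\mu^N$ and so equals $Q_d$ for some $d \in \D_N$; by \ref{app:coarse}, $d \mapsto \E_{Q_d}[g]$ is Borel for every bounded Borel $g$. The restriction of $\tau$ to the finite support of $Q_d$ amounts to a binary decision (``stop'' or ``continue'') at each internal node of the depth-$N$ binary tree, and conversely every such finite combinatorial rule extends (e.g.\ by stopping at time $0$ outside the support) to some $\tau \in \T$ with $\tau \leq N$. Let $\Sigma_N$ denote the resulting finite collection of tree stopping rules. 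For each $\sigma \in \Sigma_N$,
$$G_\sigma(x^t, d) := \E_{Q_d}\big[f_{\tau_\sigma}^{x^t}\big] = \sum_{v \in L_\sigma} Q_d(\mathrm{path}(v))\, f_{t+\mathrm{depth}(v)}\big(x^t, y_d(v)\big)\,,$$
where the sum runs over the stopping leaves $v$ selected by $\sigma$, $y_d(v)$ is the path from the root to $v$ encoded by $d$, and $Q_d(\mathrm{path}(v))$ is a finite product of terms of the form $W(a_1,a_2)$ or $1-W(a_1,a_2)$ determined by $d$ via \eqref{eq:W}. Each summand is Borel in $(x^t, d)$, so $G_\sigma$ is Borel, and the reduction above delivers
$$u_N(x^t) = \max_{\sigma \in \Sigma_N} \sup_{d \in \D_N} G_\sigma(x^t, d)\,, \qquad l_N(x^t) = \min_{\sigma \in \Sigma_N} \inf_{d \in \D_N} G_\sigma(x^t, d)\,.$$

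Since $\D_N$ is a Borel subset of the Polish space $(\X^2)^{2^N-1}$, the super-level sets of $x^t \mapsto \sup_{d\in\D_N}G_\sigma(x^t,d)$ are projections of Borel sets and hence analytic, so this sup is upper semi-analytic; the symmetric argument shows the corresponding infimum is lower semi-analytic. Finite maxima preserve upper semi-analyticity and finite minima preserve lower semi-analyticity, so $u_N$ is upper semi-analytic and $l_N$ is lower semi-analytic, completing the proof after passing to the countable sup/inf over $N$. The hard part will be the combinatorial reduction $u_N = \max_\sigma \sup_d G_\sigma$: matching an arbitrary $(Q, \tau) \in \bar\Hh_\mu^\infty \times \T$ with $\tau \leq N$ to an equivalent pair $(\sigma, d) \in \Sigma_N \times \D_N$ is conceptually transparent given the tree structure of $\bar\Hh_\mu^N$, but requires a careful accounting of adaptedness of $\tau$ along paths of the tree. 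Once this reduction is in place, no measurable-selection theorem is needed and the semi-analytic conclusion is a direct consequence of the finite-dimensional Borel parametrization.
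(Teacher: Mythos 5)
Your proposal is correct and follows essentially the same route as the paper's proof: truncate to stopping times bounded by $N$, parametrise the coarsened hypothesis by $\D_N$ and the stopping behaviour by a finite set of pruned-tree rules (your $\Sigma_N$ is exactly the paper's set of masks $\mathcal M_T$, and your $G_\sigma$ its evaluation map $h$), apply the projection lemma for semi-analyticity, and conclude via finite maxima and a countable supremum over the horizon. The only cosmetic difference is that you justify $u=\sup_N u_N$ explicitly via $\tau\wedge N$ and dominated convergence, a step the paper leaves implicit.
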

\subsubsection{Proof of the theorem}
We start by a preliminary lemma, whose proof follows closely the argument we used to prove \Cref{thm:main}.
\begin{lemma}\label{lemma:TQx}
    Let $\mu\in(0,1)$ and $E$ be an e-process for $\bar\Hh^\infty_\mu$. Then there is $\lambda_1\in I_\mu$ such that, for any $\tau\in\T$, any $Q\in\bar\Hh_\mu^\infty$, and any $x\in\X$, we have that
	$$\E_Q[E^x_\tau]\leq E_{\lambda_1}(x) = 1 + \lambda_1 (x-\mu)\,,$$
    where $E^x = (E^x_t)_{t\geq 0}$ is the sequence of Borel functions $E_t^x:\X^{t}\to I_\mu$, defined via $E_t^x(y^{t}) = E_{t+1}(x, y^t)$, for $t\geq 1$ and $y^t\in\X^t$, and $E_0^x = E_1(x)$.
\end{lemma}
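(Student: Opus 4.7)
The plan is to follow the template used in the proofs of \Cref{thm:main,thm:multi}, adapted to handle stopping times. The core geometric argument depicted in \Cref{fig:mu} remains the driving force.

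The first step is to prove the analogue of the bound from \Cref{lemma:F} for shifted e-processes: for every $x \in \X$, $Q \in \bar\Hh_\mu^\infty$, and $\tau \in \T$, one has $\E_Q[E_\tau^x] \leq F_\mu(x)$. For $x > \mu$, I would construct the measure $V_{x,Q}$ on $\X^\infty$ that draws $Z_1$ from $(\mu/x)\delta_x + (1 - \mu/x)\delta_0$ and, independently of $Z_1$, draws $(Z_2, Z_3, \dots) \sim Q$. The resulting measure belongs to $\bar\Hh_\mu^\infty$: the first marginal has two-point support with mean $\mu$, and the conditional laws at later steps inherit the constraints of $Q$. Lifting $\tau$ to a stopping time $\tilde\tau \in \T$ on $\X^\infty$ via $\tilde\tau(z^\infty) = \tau(z^{2:\infty}) + 1$ is immediate, since $\{\tilde\tau = t\} \in \sigma(Z_2, \dots, Z_t) \subseteq \F_t$. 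By the definition of $E^{z_1}$, conditioning on $Z_1$ gives $\E_{V_{x,Q}}[E_{\tilde\tau}] = (\mu/x)\E_Q[E_\tau^x] + (1 - \mu/x)\E_Q[E_\tau^0]$, and the e-process property forces this to be at most $1$, hence $\E_Q[E_\tau^x] \leq x/\mu = F_\mu(x)$. The cases $x < \mu$ and $x = \mu$ are handled symmetrically, using $\delta_1$ in place of $\delta_0$, or a deterministic first coordinate.

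Next, I would define
\begin{align*}
B_0 &= \{\beta \in I_\mu \,:\, \E_Q[E_\tau^x] \leq 1 + \beta(x - \mu),\; \forall x \in \X \cap [0,\mu),\; \forall Q \in \bar\Hh_\mu^\infty,\; \forall \tau \in \T\}, \\
B_1 &= \{\beta \in I_\mu \,:\, \E_Q[E_\tau^x] \leq 1 + \beta(x - \mu),\; \forall x \in \X \cap (\mu,1],\; \forall Q \in \bar\Hh_\mu^\infty,\; \forall \tau \in \T\}.
\end{align*}
Both are closed intervals (intersections of closed half-spaces) and non-empty by the preceding step, so $B_0 = [(\mu-1)^{-1}, \beta_0]$ and $B_1 = [\beta_1, \mu^{-1}]$ for some $\beta_0, \beta_1 \in I_\mu$. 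I would then show $\beta_0 \geq \beta_1$ by contradiction: if some $\beta^\star \in (\beta_0, \beta_1)$ existed, there would be $u_0 < \mu < u_1$ in $\X$, $Q_i \in \bar\Hh_\mu^\infty$, and $\tau_i \in \T$ (for $i \in \{0, 1\}$) with $\E_{Q_i}[E_{\tau_i}^{u_i}] > 1 + \beta^\star(u_i - \mu)$. Gluing these by taking $V \in \bar\Hh_\mu^\infty$ whose first marginal is supported on $\{u_0, u_1\}$ with the two-point weights yielding mean $\mu$, and which continues as $Q_i$ on $\{Z_1 = u_i\}$, and lifting $\tau_0, \tau_1$ into $\tilde\tau \in \T$ defined by $\tau_i(z^{2:\infty}) + 1$ on $\{z_1 = u_i\}$, conditioning on $Z_1$ would give $\E_V[E_{\tilde\tau}] > 1$, contradicting the e-process property. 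Any $\lambda_1 \in B_0 \cap B_1$ then satisfies the claim.

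The main technical obstacle is the careful bookkeeping for the lifted stopping times, ensuring that the various $\tilde\tau$ constructed are indeed stopping times with respect to the canonical filtration on $\X^\infty$, and verifying that each glued measure satisfies both the conditional mean and the two-point support constraints defining $\bar\Hh_\mu^\infty$. The two-point support restriction built into the coarsened hypothesis $\bar\Hh_\mu^\infty$ is precisely what allows the explicit first-step constructions to remain within the admissible class.
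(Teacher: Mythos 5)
Your proposal is correct and follows essentially the same route as the paper's proof: the same product construction $P_x\otimes Q$ with a lifted stopping time to show $\E_Q[E^x_\tau]\leq F_\mu(x)$ (i.e.\ that $B_0$ and $B_1$ are non-empty), the same definition of $B_0$ and $B_1$ as intersections of closed intervals, and the same glued two-branch measure and stopping time to derive the contradiction when $\beta_0<\beta_1$. The only cosmetic differences are that you isolate the $F_\mu$ bound as a preliminary step rather than phrasing it as membership of the endpoints of $I_\mu$ in $B_0$ and $B_1$, and that your lifted stopping times should be extended (e.g.\ to the constant $1$) off the branches where you define them so that they are defined on all of $\X^\infty$ — a triviality you already flag.
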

\begin{proof}
Define the sets
\begin{align*}
        &B_0 = \big\{\beta\in I_\mu\,:\,\forall x\in\X\cap[0,\mu),\forall Q\in\bar\Hh^\infty_\mu,\forall \tau\in\T,\, \E_Q[E^x_\tau]\leq 1 + \beta(x-\mu)\big\}\,;\\
        &B_1 = \big\{\beta\in I_\mu\,:\,\forall x\in\X\cap(\mu,1],\forall Q\in\bar\Hh^\infty_\mu,\forall \tau\in\T,\, \E_Q[E^x_\tau]\leq 1 + \beta(x-\mu)\big\}\,.
\end{align*}
We claim that $B_0$ and $B_1$ are non-empty. Indeed, we now show that $\mu^{-1}\in B_1$. Analogously one can prove that $(\mu-1)^{-1}\in B_0$. Fix $x\in\X\cap[\mu,1]$, $Q\in\bar\Hh_\mu^{\infty}$, and $\tau\in\T$. Define $\tau_x:\X^\infty\to\Nn$ as $\tau_x(x^\infty) = 1+\tau(x^{2:\infty})$, if $x_1=x$, otherwise $\tau_x(x^\infty) = 1$. It is easily checked that $\tau\in\T$. Also, we let $P_x = \frac{\mu}{x}\delta_x + (1-\frac{\mu}x)\delta_0$, and we define $V_{x,Q} = P_x\otimes Q$.\footnote{Here we used that, clearly, one can write $\X^\infty = \X\times\X^\infty$.} Clearly, $V_{x,Q}\in\bar\Hh_\mu^\infty$. Since $E$ is an e-process,
$$1\geq\E_{V_{x,Q}}[E_{\tau_x}] = (1-\tfrac{\mu}x)E_1(0) + \tfrac{\mu}x\E_Q[E^x_\tau]\geq \tfrac{\mu}x\E_Q[E^x_\tau]\,.$$
So, $E_Q[E^x_\tau]\leq \frac x\mu= 1 + \mu^{-1}(x-\mu)$. Thus, $\mu^{-1}\in B_1$. 

Once more, our next argument follows closely the one depicted in \Cref{fig:mu}. Since both $B_0$ and $B_1$ can be written as intersections of non-empty closed intervals, we can find $\beta_0$ and $\beta_1$ such that $B_0=[(\mu-1)^{-1}, \beta_0]$ and $B_1=[\beta_1,\mu^{-1}]$. As usual, we want to show that $B_0\cap B_1$ is non-empty, or equivalently that $\beta_1\leq\beta_0$. Assume that this was not the case and there is $\beta^\star\in(\beta_0,\beta_1)$. Since $\beta^\star\notin B_0$, there must be $u_0<\mu$ in $\X$, $Q_0\in\bar\Hh_\mu^{\infty}$, and $\tau_0\in\T$, such that $\E_{Q_0}[E^{u_0}_{\tau_0}]>1+\beta^\star(u_0-\mu)$. Similarly, as $\beta^\star\notin B_1$, one can find $u_1\in\X\cap(\mu,1]$, $Q_1\in\bar\Hh_{\mu}^{\infty}$, and $\tau_1\in\T$, such that $\E_{Q_1}[E^{u_1}_{\tau_1}]>1+\beta^\star(u_1-\mu)$. Define $\tau:\X^\infty\to\Nn$ via $\tau(x^\infty) = 1 + \tau_0(x^{2:\infty})$ if $x_1<\mu$, $\tau(x^\infty)= 1+\tau_1(x^{2:\infty})$ if $x_1>\mu$, and $\tau(x^\infty)=1$ if $x_1=\mu$. Also, let $V = \frac{u_1-\mu}{u_1-u_0}\delta_{u_0}\otimes Q_0 + \frac{\mu-u_0}{u_1-u_0}\delta_{u_1}\otimes Q_1$. Then, we have that
$$ \E_V[E_\tau] = \frac{u_1-\mu}{u_1-u_0}\,\E_{Q_0}[E^{u_0}_{\tau_0}] + \frac{\mu-u_0}{u_1-u_0}\,\E_{Q_1}[E^{u_1}_{\tau_1}] > 1\,. $$
However, this is a contradiction since $\tau\in\T$, $V\in\bar\Hh_\mu^\infty$, and $E$ is an e-process. We can thus conclude that $B_0\cap B_1\neq\varnothing$ and, in particular, there must be $\lambda_1\in I_\mu$ such that, for every $x\in\X\setminus\{\mu\}$, every $Q\in\bar\Hh_\mu^{\infty}$, and every $\tau\in\T$, we have $\E_Q[E^x_\tau]\leq 1+\lambda_1(x-\mu)$. 

To conclude, we only need to check the case $x=\mu$. However, the argument that we used at the beginning of this proof to show that $\mu^{-1}\in B_1$ holds for $x=\mu$, showing that $\E_Q[E_\tau^\mu]\leq 1$, for any $Q\in\bar\Hh_\mu^\infty$ and any $\tau\in\T$. So, we conclude.\qed
\end{proof}
We can now prove \Cref{thm:proc}. The main idea is a generalisation of the approach that we used to deal with the case $T=2$ in the proof of \Cref{thm:multi}.
\begin{proof}[of \Cref{thm:proc}]
	First, let us show that all the e-processes in $\Ecbinf_\mu$ are maximal (for $\Hh_\mu^\infty$). Assume that this was not the case. Then there is a $\lambda^\infty\in\Lambda_\mu^\infty$ and an e-process (for $\Hh_\mu^\infty$) $E\succeq E^{\lambda^\infty}$ such that, for some $t\geq t$, $E_t\succ E_{\lambda^t}$ (clearly this cannot happen for $t=0$, as we must have $E_0\leq 1$ for every e-process). So, there is $\hat x^t\in\X^t$ such that $E_t(\hat x^t)>E_{\lambda^t}(\hat x^t)$. It is not hard to see that there is a $Q\in\Hh_\mu^\infty$ that puts non-zero mass on the set $\{x^\infty\in\X^\infty\,:\,x^t = \hat x^t\}$. We can consider the constant stopping time $\tau=t$. Then, we have that $0< E_t(\hat x^t) - E_{\lambda^t}(\hat x^t) \leq \E_Q[E_t - E_{\lambda^t}] = \E_Q[E_\tau] - 1 \leq  0$,  a contradiction.	

	Hence, we are left with showing that $\Ecbinf_\mu$ is a majorising e-process class for $\Hh^\infty_\mu$. Since every e-process in for $\Hh^\infty_\mu$ is also an e-process for $\bar\Hh^\infty_\mu$, it is sufficient to show that $\Ecbinf_\mu$ is a majorising e-process class for $\bar\Hh^\infty_\mu$. Fix an e-process $E$ (for $\bar\Hh^\infty_\mu$). We introduce the following notation. For $t\geq 1$ and $x^t\in\X^t$, we denote as $E^{x^t} = (E^{x^t}_s)_{s\geq 0}$ the sequence of non-negative functions $E^{x^t}_s:\X^{s-1}\to I_\mu$, defined via $E_s^{x^t}(y^s) = E_{t+s}(x^t, y^s)$, for any $y^s\in\X^s$ and $s\geq 1$, and $E_0^{x^t} = E_t(x^t)$. In what follows, we say that a $T$-tuple $\lambda^{T}$ of Borel functions $\lambda_i:\X^{i-1}\mapsto I_\mu$, \emph{dominates $E$ at level $T$} if, for all $Q\in\bar\Hh^\infty_\mu$, for any $t\in[1:T]$, for all $\tau\in \T$ and $x^t\in\X^t$, we have that $$\E_Q[E^{x^t}_\tau]\leq E_{\lambda^t}(x^t)\,.$$ We will show that there is a $\lambda^\infty\in\Lambda_\mu^\infty$ such that, for any $T\geq 1$, $\lambda^T$ dominates $E$.

    We construct this sequence progressively. By \Cref{lemma:TQx}, we know that there is $\lambda_1\in I_\mu$ that dominates $E$ at level $1$. Now, say that, for some $T\geq 2$, there is a $\lambda^{T-1}\in\Lambda_\mu^{T-1}$ that dominates $E$ at level $T-1$. 
Let us show that this imply the existence of a Borel $\lambda_T:\X^{T-1}\to I_\mu$, such that $\lambda^T=(\lambda^{T-1},\lambda_T)$ dominates $E$ at level $T$. Define $\mathcal S=\{x^{T-1}\in\X^{T-1}\,:\,E_{\lambda^{T-1}}(x^{T-1})\neq 0\}$. For any $x^{T-1}\in\mathcal S$, define the sequence of Borel functions $\tilde E^{x^{T-1}}  = (\tilde E^{x^{T-1}}_t)_{t\geq 0}$ as follows. $\tilde E_0^{x^{T-1}} = 1$, and $\tilde E_t^{x^{T-1}}(y^t) = E_{T-1+t}(x^{T-1}, y^t)/E_{\lambda^{T-1}}(x^{T-1})$ for $t\geq 1$ and $y^t\in\X^t$. By construction, $\tilde E^{x^{T-1}}$ is an e-process. In particular, by \Cref{lemma:TQx}, there must be $\tilde\lambda_1^{x^{T-1}}\in I_\mu$ (which of course might depend on $x^{T-1}$ in a non-Borel way) dominating $\tilde E^{x^{T-1}}$ at level $1$. So, for any $x^{T-1}\in\mathcal S$ and $y\in\X$,
$$\sup_{Q\in\bar\Hh^\infty_\mu}\sup_{\tau\in\T}\E_Q[  E_\tau^{(x^{T-1},y)}] \leq  E_{\lambda^{T-1}}(x^{T-1}) \big(1 +  \tilde\lambda_1^{x^{T-1}}(y-\mu)\big)\,.$$ 
We now define the mappings $u$ and $l$ on $\mathcal S$ as 
\begin{align*}
	u(x^{T-1}) &=\sup_{\tau\in\T}\sup_{Q\in\bar\Hh^\infty_\mu}\sup_{y\in\X\cap(\mu,1]}\frac{1}{y-\mu}\left(\frac{\E_Q[E^{(x^{T-1},y)}_\tau]}{E_{\lambda^{T-1}}(x^{T-1})}-1\right)\,;\\
   	l(x^{T-1}) &= \inf_{\tau\in\T}\inf_{Q\in\bar\Hh^\infty_\mu}\inf_{y\in\X\cap[0,\mu)}\frac{1}{\mu-y}\left(\frac{\E_Q[E^{(x^{T-1},y)}_\tau]}{E_{\lambda^{T-1}}(x^{T-1})}-1\right)\,.
\end{align*}
It follows from \Cref{lemma:trees} that $(x^{T-1},y)\mapsto \sup_{\tau\in\T}\sup_{Q\in\bar\Hh^\infty_\mu}\frac{1}{y-\mu}\big(\frac{\E_Q[E^{(x^{T-1},y)}_\tau]}{E_{\lambda^{T-1}}(x^{T-1})}-1\big)$ is upper semi-analytic (note that $\mathcal S$ is a Borel set, so the domain restriction does not cause any issue). In particular,  $u$ is also upper semi-analytic on $\mathcal S$ (see \Cref{lemma:supinf} in \ref{app:lusin}). By an analogous argument, $l$ is lower semi-analytic on $\mathcal S$. Moreover, $$(\mu-1)^{-1}\leq u(x^{T-1})\leq\tilde\lambda_1^{x^{T-1}}\leq l(x^{T-1})\leq\mu^{-1}$$ for all $x^{T-1}\in\mathcal S$, and so in particular by \Cref{prop:ufl} (see \ref{app:lusin}) there is a Borel function $\lambda_{T}:\mathcal S\to I_\mu$ such that $l(x^{T-1})\geq \lambda_{T}(x^{T-1}) \geq u(x^{T-1})$ for all $x^{T-1}\in\mathcal S$. We can extend $\lambda_{T}$ to the whole $\X^{T-1}$, by setting $\lambda_{T}(x^{T-1}) = 0$, if $x^{T-1}\in \X^{T-1}\setminus \mathcal S$. Noting that $x^{T-1}\in\X^{T-1}\setminus \mathcal S$ implies that $E_{T-1+t}(x^{T-1}, y^t)=0$ for any $t\geq 1$ and $y^t\in\X^{t}$,\footnote{Indeed, for every $t$ the non-negativity of $E_{T-1+t}$ implies that this must be true for $Q$-almost every $y^t$, for every $Q\in\bar\Hh^\infty_\mu$, and so for every $y^t\in\X^t$.} one can easily verify that $\lambda^{T}$ dominates $E$ at level $T$. 

So, we can construct iteratively a sequence $\lambda^\infty\in\Lambda_\mu^\infty$ such that, for each $T\geq 1$, $\lambda^T$ dominates $E$ at level $T$. It follows immediately that $E^{\lambda^\infty}\succeq E$, and so $\Ecbinf_\mu$ is a majorising e-process class for $\bar\Hh_\mu^\infty$. \qed
\end{proof}

\subsubsection[Proof of Lemma 4]{Proof of \Cref{lemma:trees}}\label{app:trees}
\begin{figure}[t!]
  \centering
  \begin{subfigure}[t]{0.45\textwidth}
    \centering
    {\raisebox{7.5mm}{\begin{tikzpicture}[
  grow=right,
  sibling distance=8mm,
  level distance=20mm,
  edge from parent/.style={draw}
]
  \node {$\bullet$}
    child { node {$a_2$}
      child { node {$a_2, b_4$}
        child [missing]
        child [missing]
      }
      child [missing]
      child { node {$a_2, b_3$}
        child { node {$a_2, b_3, c_6$} }
        child { node {$a_2, b_3, c_5$} }
      }
    }
    child [missing]
    child [missing]
    child [missing]
    child { node {$a_1$} 
    };
\end{tikzpicture}}}
    \caption{The pruned subtree generated by the stopping time $\tau^\star$ (see main text) acting on the tree in \Cref{fig:tree} (for $d=(a^2, b^4, c^8)$)}\label{fig:prune}
  \end{subfigure}
  \hfill
  \begin{subfigure}[t]{0.45\textwidth}
    \centering
    \begin{tikzpicture}[
  grow=right,
  sibling distance=8mm,
  level distance=20mm,
  edge from parent/.style={draw},
  every node/.style={inner sep=1pt},
  circ/.style={circle, draw, minimum size=4mm, inner sep=0pt}
]
  \node {$\bullet$}
    child { node {$\bullet$}
      child { node[circ] {}
        child { node {$\bullet$} }
        child { node {$\bullet$} }
      }
      child [missing]
      child { node {$\bullet$}
        child { node[circ] {} }
        child { node[circ] {} }
      }
    }
    child [missing]
    child [missing]
    child [missing]
    child { node[circ] {}
      child { node {$\bullet$}
        child { node {$\bullet$} }
        child { node {$\bullet$} }
      }
      child [missing]
      child { node {$\bullet$}
        child { node {$\bullet$} }
        child { node {$\bullet$} }
      }
    };
\end{tikzpicture}
    \caption{The mask that can be applied to the tree in \Cref{fig:tree} to obtain the same pruned tree as generated by $\tau^\star$ (\Cref{fig:prune}).}\label{fig:mask}
  \end{subfigure}

  \caption{Pruned tree and mask representing the action on $d\in\D_T$ of a stopping time $\tau^\star$ bounded by $T$.}
\end{figure}
We recall that for any time horizon $T \geq 1$, $\D_T$ is the set of admissible tuples of coefficients that generate elements of $\bar\Hh_\mu^T$ via the construction that we outlined in \ref{app:coarse}. We now generalise the idea behind \Cref{fig:tree} and associate to $d\in\D_T$ a binary tree structure, with one root and $T$ additional levels. This will be particularly useful to deal with e-processes and stopping time. For convenience, in what follows we denote as $\T_T$ the set of all stopping times bounded by $T$. We can notice that a stopping time $\tau\in\T_T$ defines a subset of the tree. More precisely, it defines a pruned tree, namely a fully connected subtree that contains the root. Note that the converse is also true. Given $d\in\D_T$ and a pruned tree, there is a $\tau\in\T_T$ that induces this subtree. To make this more explicit, consider the case $T=3$ again. We might consider a stopping time $\tau^\star$ that stops at $1$ if $a_1$ is observed. Conversely, if $a_2$ is present, it if $b_4$ is observed stops at $2$, otherwise at $3$. In such case, the observable states are the leaves of the tree in \Cref{fig:prune}. Conversely, such pruned tree is induced by any stopping time $\tau$ that behaves like $\tau^\star$ on $d$.

We remark that fixed a $d\in\D_T$, although there are infinitely many stopping times bounded by $T$ (at least if $\X$ has infinite cardinality), they result on finitely many possible pruned trees when applied to $Q_d$. In particular, given a vector of Borel functions $(f_t)_{t\in[0:T]}$, with $f_t:\X^t\to\R$, and fixed $d\in\D_T$, we have that the set $\{E_{Q_d}[f_\tau]\,:\,\tau\in\T_T\}$ has finitely many elements. Alternatively, given $d$, each $\tau$ can be represented as a \emph{mask} applied on the full tree, in a way that defines the pruned tree structure. By mask, here we mean something like what depicted in \Cref{fig:mask}, where the blank circles represent the leaves of the pruned subtree (\Cref{fig:prune}) induced by the stopping time $\tau^\star$ when applied to the full tree of \Cref{fig:tree}. We denote as $\mathcal M_T$ the (finite) set of masks for for the binary tree with the root and $T$ levels. Of course, the same stopping time $\tau$ can be associated to different masks when applied to different $d\in\D_T$, as the times at which $\tau$ stops depend on the value of the realisations of $X_t$ observed, namely on $d$. However, this will not prevent us from using the fact that there are only finitely many masks in $\mathcal M_T$, which will be a main ingredient in the proof of \Cref{lemma:trees}. In practice, the key observation that we need is the fact that for any pair $(Q,\tau)\in\bar\Hh_\mu^T\times\T_T$, we can find a pair $(d,M)\in\D_T\times\mathcal M_T$ that define the very same pruned tree. The converse is also true, for any pair $(d,M)$ we can find a $(Q,\tau)$ that generated the same pruned tree. We also note that for any vector $(f_t)_{t\in[0:T]}$ of Borel functions $f_t:\X^t\to\R$, for any 
$(Q,\tau)\in\bar\Hh_\mu^\infty\times\T_T$, the value of $\E_Q[f_\tau]$ if fully determined by the pruned tree generated by the pair $(Q,\tau)$. For instance, if we consider a pair $(Q,\tau)$ that induces the pruned tree of \Cref{fig:prune}, recalling the definition \eqref{eq:W} of $W$, one can easily work out that \begin{align}\begin{split}\label{eq:masked}\E_Q[f_\tau] &= W(a_1,a_2)f_1(a_1) + (1-W(a_1,a_2))(1-W(b_3,b_4))f_2(a_2,b_4) \\&+ (1-W(a_1,a_2))W(b_3,b_4)\big(W(c_5,c_6)f_3(a_2,b_4,c_5) + (1-W(c_5,c_6))f_3(a_2,b_4,c_5)\big)\,,\end{split}\end{align} no matter the specific $Q$ and $\tau$ involved. From this observation, we see that we can well define a mapping $h$ that, given $d\in\D_T$, $M\in\mathcal M_T$, and a vector $f=(f_t)_{t\in[0:T]}$ of Borel functions, returns the value $$h(d,M,f) = \E_Q[f_\tau]\,,$$ where $(Q,\tau)\in\bar\Hh_\mu^T\times\T_T$ is any pair that generates the same pruned tree as $(d,M)$.

After all these preliminaries, we are finally ready to prove \Cref{lemma:trees}.

\begin{proof}[of \Cref{lemma:trees}]
We show that the claim holds for $u$, the proof for $l$ being analogous. Let $f$ be a sequence of bounded Borel functions as in the statement. Fix $t\geq 1$. For $x^t\in\X^t$ recall that, for all $s\geq 1$, we let $f^{x^t}_s:y^s\mapsto f_{t+s}(x^t,y^s)$. Fix $T\geq 1$. Let $u_T:\X^t\to\R$ be defined as
$$u_T(x^t) = \sup_{\tau\in\T_T}\sup_{Q\in\bar\Hh_\mu^T}\E_Q[f^{x^t}_\tau]\,.$$
Following the discussion above, we can also write
$$u_T(x^t) = \max_{M\in\mathcal M_T}\sup_{d\in\D_T}h(d, M, f^{x^t})\,,$$ where we can take the maximum as $\mathcal M_T$ has finite cardinality. Now, note that for each $M\in\mathcal M_T$, the mapping $(d,x^t)\mapsto h(d,M, f^{x^t})$ is Borel. This follows from the fact that $W$, defined in \eqref{eq:W}, and all the $f_{t+s}$ are Borel.\footnote{See \eqref{eq:masked} (of course replacing $f$ with $f^{x^t}$) to get a more concrete idea of how this mapping looks like when $T=3$, with $M$  the mask in \Cref{fig:mask} and $d$  expressed as $(a^2, b^4, c^8)$.} In particular, by \Cref{lemma:supinf}, for each $M\in\mathcal M_T$ the mapping $x^t\mapsto \sup_{d\in\D_T}h(d, M, f^{x^t})$ is upper semi-analytic. Since the maximum of finitely many upper semi-analytic functions is upper semi-analytic, we conclude that $u_T$ is upper semi-analytic. We can then notice that $u = \sup_{T\geq 1}u_T$. Since this is the countable supremum of upper semi-analytic functions, it is upper semi-analytic. \qed
\end{proof}

\subsection{A functional variant of Lusin's separation theorem}\label{app:lusin}
First, let us recall a few standard definitions and results from descriptive set theory. We refer to \cite{karoui2013capacities} or to the monograph \cite{kechris1995classical} for more details.
\begin{definition}
	A set $A\subseteq\R^d$ is called \emph{analytic} if there exists a Polish space $\mathcal Y$, and a Borel set $B\subseteq 
    \R^d\times \Y$. such that $A = \pi(B)$, where $\pi$ is the projection $(x^d,y)\mapsto x^d$, from $\R^d\times\Y$ to $\R^d$. A set $C\subseteq\R^d$ is called \emph{co-analytic} if it is the complement of an analytic set.
\end{definition}
\begin{theorem}[Lusin's separation theorem] \label{thm:lusin}
	Let $A$ be an analytic set in $\R^d$ and $C$ a co-analytic set in $\R^d$. If $A\subseteq C$, then there exists a Borel set $B$ such that $A\subseteq B\subseteq C$. 
\end{theorem}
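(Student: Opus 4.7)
The plan is to prove Lusin's separation theorem by the classical descriptive-set-theoretic argument based on Suslin schemes. First observe that the statement can be recast as a separation result: since $C$ is co-analytic, its complement $A'=\R^d\setminus C$ is analytic, and the hypothesis $A\subseteq C$ simply means $A\cap A'=\varnothing$. Hence it suffices to prove that any two disjoint analytic subsets of $\R^d$ can be Borel-separated, i.e., there is a Borel set $B$ containing one and disjoint from the other; applying this to $A$ and $A'$ yields a Borel $B$ with $A\subseteq B\subseteq \R^d\setminus A'=C$.

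The main technical ingredient is a countable-union lemma for Borel-separability: if $U=\bigcup_n U_n$, $V=\bigcup_m V_m$, and for every $n,m$ there exists a Borel set $B_{n,m}$ with $U_n\subseteq B_{n,m}$ and $B_{n,m}\cap V_m=\varnothing$, then $B=\bigcup_n\bigcap_m B_{n,m}$ is a Borel set Borel-separating $U$ from $V$. The verification is elementary: $B$ is a countable combination of Borel sets, so it is Borel; any $x\in U_n$ lies in $B_{n,m}$ for every $m$, so $U\subseteq B$; and any $x\in V_m$ is excluded from $B_{n,m}$ for every $n$, so $B\cap V=\varnothing$. The contrapositive of this lemma says that if $U$ and $V$ are not Borel-separable, then at least one pair $(U_n,V_m)$ is also not Borel-separable.

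Next I would invoke the standard Suslin scheme representation of analytic subsets of $\R^d$: any non-empty analytic set $A_\star$ can be written in the form $A_\star=\bigcup_{\sigma\in\Nn^\Nn}\bigcap_k F^\star_{\sigma|k}$ for a family of non-empty closed sets $\{F^\star_s\}$ indexed by finite integer sequences $s$, satisfying $F^\star_{s'}\subseteq F^\star_s$ whenever $s$ is a prefix of $s'$, and with diameters shrinking to zero along every infinite branch. Writing $A$ and $A'$ via such schemes $\{F_s\}$ and $\{F'_s\}$ and splitting on the first coordinate, one obtains countable decompositions $A=\bigcup_n A^{(n)}$ and $A'=\bigcup_m A'^{(m)}$, where $A^{(n)}=\bigcup_{\sigma:\sigma(1)=n}\bigcap_k F_{\sigma|k}$ is itself analytic, and likewise for $A'^{(m)}$. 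Iterating the splitting to higher coordinates produces an analogous refinement at each depth.

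The main argument is then by contradiction. Assume $A\cap A'=\varnothing$ but $A$ and $A'$ are not Borel-separable. By the contrapositive of the countable-union lemma applied to the first-coordinate splitting, there exist $n_1,m_1$ such that $A^{(n_1)}$ and $A'^{(m_1)}$ are not Borel-separable. Iterating this step coordinate by coordinate produces infinite sequences $\sigma,\sigma'\in\Nn^\Nn$ such that for every $k\geq 1$ the analytic pieces obtained after fixing $\sigma|k$ and $\sigma'|k$ remain non-Borel-separable; in particular they are non-empty, which forces $F_{\sigma|k}$ and $F'_{\sigma'|k}$ to be non-empty closed sets for every $k$. The shrinking-diameter property collapses each of the nested intersections $\bigcap_k F_{\sigma|k}$ and $\bigcap_k F'_{\sigma'|k}$ to a single point in $\R^d$; these two points must coincide, because disjoint closed sets in $\R^d$ are trivially Borel-separable by disjoint open neighbourhoods (contradicting non-separability at every stage). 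The common limit point lies in $A\cap A'$, yielding the desired contradiction. The main obstacle is the iterative construction of $(\sigma,\sigma')$: one must keep track, at each stage, of which refined pair of analytic pieces remains non-Borel-separable, and this bookkeeping is precisely what the countable-union lemma enables.
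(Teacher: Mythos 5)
The paper does not contain a proof of this statement: \Cref{thm:lusin} is quoted as a classical result from descriptive set theory, with the reader referred to \cite{kechris1995classical} and \cite{karoui2013capacities}, so there is no in-paper argument to compare yours against. What the paper actually proves is the downstream consequence, \Cref{prop:ufl}, which takes \Cref{thm:lusin} as given.

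Your proposal is the standard Luzin--Sierpi\'nski argument (essentially Theorem 14.7 in Kechris's monograph) and is correct in all essentials: the reduction of separation-from-a-co-analytic-set to Borel-separation of two disjoint analytic sets, the countable-union lemma with separator $B=\bigcup_n\bigcap_m B_{n,m}$, and the tree-style contradiction all check out. Two points should be made explicit in a complete write-up. First, the degenerate cases $A=\varnothing$ or $A'=\varnothing$ are handled directly by $B=\varnothing$ or $B=\R^d$; your iteration implicitly uses that a non-Borel-separable pair consists of non-empty sets (the empty set is separable from anything), which you do flag, and which is what makes the closed sets $F_{\sigma|k}$ and $F'_{\sigma'|k}$ non-empty at every stage. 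Second, the regular Suslin scheme with vanishing diameters is not available from an arbitrary scheme presentation and needs a word of justification: the standard construction writes a non-empty analytic set $A$ as a continuous image $f(\Nn^\Nn)$ of Baire space and sets $F_s=\overline{f(N_s)}$ for the basic cylinders $N_s$, so that continuity of $f$ gives the shrinking diameters along each branch and completeness of $\R^d$ collapses the nested intersection to $\bigcap_k F_{\sigma|k}=\{f(\sigma)\}\subseteq A$. That last inclusion is precisely what licenses your concluding claim that the common limit point lies in $A\cap A'$; with it in place, the final step is sound, since two distinct limit points would make $F_{\sigma|k}$ and $F'_{\sigma'|k}$ disjoint closed sets for $k$ large, and disjoint closed sets are Borel-separable outright, e.g.\ by the open set $\R^d\setminus F'_{\sigma'|k}$, contradicting the non-separability of the analytic pieces they contain.
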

\begin{definition}
	Let $\Z\subseteq\R^d$ be a Borel set and $f:\Z\to\overline\R$. $f$ is \emph{upper semi-analytic} if its superlevel sets are analytic (namely for every $r\in\R$ the sets $\{f\geq r\}$ and $\{f>r\}$ are analytic). $f$ is lower semi-analytic if its sublevel sets are analytic (namely for every $r\in\R$ the sets $\{f\leq r\}$ and $\{f<r\}$ are analytic). 
\end{definition}
Clearly, any Borel function is both upper and lower semi-analytic. 
\begin{lemma}\label{lemma:supinf}
	Let $\Z\subseteq\R^d$ and $\Z'\subseteq\R^{d'}$ be Borel sets. Let $f:\Z\times\Z'\to\overline\R$ be an upper semi-analytic function. Then $u:\Z\to\overline\R$ defined as $u(z) = \sup_{z'\in\Z'}f(z,z')$ is upper semi-analytic. Similarly, let $l:\Z\to\overline\R$ be given by $l(z) = \inf_{z'\in\Z'}f(z,z')$. Then, $l$ is lower semi-analytic.
\end{lemma}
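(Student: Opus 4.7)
The plan is to reduce everything to two standard closure properties of analytic sets: projections of analytic sets are analytic, and countable intersections of analytic sets are analytic. Both are immediate from the definition of analytic sets as projections of Borel subsets of products with a Polish space. Given these, the proof becomes essentially a one-line set-theoretic identity.

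For the supremum, I would show that for every $r\in\R$, the strict superlevel set of $u$ is analytic. The key identity is
$$\{z\in\Z : u(z)>r\} \;=\; \pi_{\Z}\bigl(\{(z,z')\in\Z\times\Z' : f(z,z')>r\}\bigr),$$
where $\pi_{\Z}$ denotes the projection onto the first coordinate; this holds because $\sup_{z'}f(z,z')>r$ exactly when some $z'$ attains $f(z,z')>r$. Since $f$ is upper semi-analytic, the bracketed set is analytic, and so is its projection onto $\Z$. For the weak superlevel sets I would use $\{u\geq r\}=\bigcap_{n\geq1}\{u>r-1/n\}$, a countable intersection of analytic sets, hence analytic. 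Together these show $u$ is upper semi-analytic.

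The infimum case is symmetric: the statement actually invoked in the proof of \Cref{lemma:trees} is that for $f$ lower semi-analytic, $l(z)=\inf_{z'}f(z,z')$ is lower semi-analytic. This can either be proved by repeating the argument on strict sublevel sets $\{l<r\}=\pi_{\Z}(\{f<r\})$, or more compactly by observing that $-f$ is upper semi-analytic and $l=-\sup_{z'}(-f)$, so applying the first part to $-f$ shows that $-l$ is upper semi-analytic, which is equivalent to $l$ being lower semi-analytic.

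The only real obstacle is isolating the projection and countable-intersection closure properties of analytic sets. These are entirely standard (cf.\ the references to descriptive set theory cited just before the statement), so once they are taken as given, no further work is required.
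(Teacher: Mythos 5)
The paper states \Cref{lemma:supinf} without proof, recalling it as a standard fact from the cited descriptive-set-theory references, so there is no in-paper argument to compare against; your proof is the standard one and is correct: $\{u>r\}$ is the projection onto $\Z$ of the analytic set $\{(z,z'): f(z,z')>r\}$, hence analytic, and $\{u\geq r\}=\bigcap_{n\geq 1}\{u>r-1/n\}$ is a countable intersection of analytic sets, hence analytic (the closure of analytic sets under projections and countable intersections being exactly the standard facts one must import). You also correctly read the infimum clause as applying to a lower semi-analytic $f$ — which is how the lemma is actually invoked in \Cref{lemma:trees} and the proof of \Cref{thm:proc} — and your reduction $l=-\sup_{z'}(-f)$ delivers precisely that.
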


We now prove a consequence (\Cref{prop:ufl}) of Lusin's separation theorem that, although might be already known, we could not find in the literature. In short, we want to prove that if a lower semi-analytic function dominates an upper semi-analytic function, then there is a Borel function that separates them. 
\begin{lemma}\label{lem:usa}
    Let $\Z\subseteq\R^d$ be a Borel set and $u:\Z\to\R$ be an upper semi-analytic function bounded from below. Then there exists a non-decreasing sequence of simple (namely taking finitely many values) upper semi-analytic functions $(u_n)_{n\geq 1}$ such that $u_n\to u$ point-wise. Similarly, if $l:\Z\to\R$ is a lower semi-analytic function bounded from above, there exists a non-increasing sequence of simple lower semi-analytic functions $(l_n)_{n\geq 1}$ such that $l_n\to l$ point-wise.
\end{lemma}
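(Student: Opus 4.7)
The plan is to adapt the classical dyadic approximation of measurable functions by simple functions, with the only substantive issue being to check that the resulting simple functions inherit upper (resp.\ lower) semi-analyticity from $u$ (resp.\ $l$).

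First I would reduce to the non-negative case. If $u \geq -M$, set $\tilde u = u + M \geq 0$. Translation by a constant preserves upper semi-analyticity, since $\{\tilde u \geq r\} = \{u \geq r - M\}$ and $\{\tilde u > r\} = \{u > r - M\}$ remain analytic. If simple upper semi-analytic functions $(\tilde u_n)_{n\geq 1}$ can be built with $\tilde u_n \uparrow \tilde u$ pointwise, then $u_n := \tilde u_n - M$ has the properties required of $u$.

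For $\tilde u \geq 0$ I would use the standard dyadic approximation
\[
\tilde u_n(x) \;=\; \min\!\left(n,\; 2^{-n}\lfloor 2^n \tilde u(x) \rfloor\right),
\]
which takes values in the finite set $\{k\cdot 2^{-n} : k = 0, 1, \dots, n\cdot 2^n\}$ and is therefore simple. Monotonicity $\tilde u_n \leq \tilde u_{n+1}$ follows from $2\lfloor 2^n \tilde u\rfloor \leq \lfloor 2^{n+1}\tilde u\rfloor$, and pointwise convergence from $\tilde u_n(x) \in (\tilde u(x) - 2^{-n}, \tilde u(x)]$ whenever $\tilde u(x) < n$. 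The only non-routine step is to verify that each $\tilde u_n$ is upper semi-analytic. Since its range is finite, for every $r \in \R$ the set $\{\tilde u_n \geq r\}$ (resp.\ $\{\tilde u_n > r\}$) coincides with $\{\tilde u_n \geq c\}$ for the smallest value $c$ in the range of $\tilde u_n$ that is $\geq r$ (resp.\ $> r$). Unfolding the definition gives $\{\tilde u_n \geq k\cdot 2^{-n}\} = \{\tilde u \geq k\cdot 2^{-n}\}$ for every $k \in \{0, 1, \dots, n\cdot 2^n\}$, since the cap at $n$ only activates on $\{\tilde u \geq n\} \subseteq \{\tilde u \geq k\cdot 2^{-n}\}$. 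Hence every superlevel set of $\tilde u_n$ is a superlevel set of $\tilde u$, and so analytic by hypothesis.

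The lower semi-analytic case is entirely symmetric: given $l$ lower semi-analytic and bounded above by $M$, one may either apply the first part to the non-negative upper semi-analytic function $M - l$, or directly set $l_n(x) = \max(-n,\, 2^{-n}\lceil 2^n l(x) \rceil)$ and repeat the argument with sublevel sets in place of superlevel sets. I do not foresee any further obstacle; the only point that requires any genuine thought is the superlevel-set identity highlighted above, which is precisely where the upper semi-analyticity of $u$ is consumed.
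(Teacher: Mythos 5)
Your proposal is correct and takes essentially the same approach as the paper: a dyadic simple-function approximation whose validity rests on the single observation that every superlevel set of the approximant is a superlevel set of $u$ at a dyadic level, hence analytic. Your one-formula construction $\min\left(n,\,2^{-n}\lfloor 2^n \tilde u\rfloor\right)$ handles truncation and discretization simultaneously, which is slightly tidier than the paper's two-stage argument (bounded case first, then a diagonal construction over the truncations $\min(u,t)$), but the substance is identical.
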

\begin{proof}
    First, assume that $u$ is bounded. Without loss of generality we can assume that $u$ takes values in $[0,1]$. For each $n$, for $t=0,\dots, 2^n$, let $A_t^n = \{u\geq t2^{-n}\}$. Each $A_t^n$ is an analytic set. Define $u_n$ as follows. For any $x\in A_{2^n}^n$, $u_n(x) = 1$. For any $t=0,\dots 2^n-1$, for $x\in A_t^n\setminus A_{t+1}^n$, $u_n(x) = t2^{-n}$. Then, it is easily checked that $u_n$ is upper semi-analytic. Moreover, $u_n$ takes finitely many values, and by construction it is non-decreasing and converges uniformly to $u$. 

    Now, let $u$ be only bounded from below. Without loss of generality we can assume that $u$ is non-negative. Then, for each integer $t\geq 1$ we can define $v_t = \min(u,t)$. Each $v_t$ is a bounded upper semi-analytic function, and in particular, we have that there is a non-decreasing sequence $(v_{t,n})_{n\geq 1}$ of simple upper semi-analytic functions that converges uniformly to $v_t$. Now, for each $t\geq 1$, there is $n_t\geq 1$ such that $\sup_{x\in\Z}|v_{t,n_t}(x)-v_t(x)|\leq 1/t$. We define $u_t = \max_{s\leq t} v_{s, n_s}$. Each $u_t$ is an upper semi-analytic simple function, as the maximum of finitely many upper semi-analytic simple functions. By construction, the sequence $(u_t)_{t\geq 1}$ is non-decreasing and $u_t\to u$ point-wise, since $v_t\to u$.

    The conclusion for $l$ follows automatically, as $-l$ is upper semi-analytic and bounded from below.\qed
\end{proof}

\begin{proposition}\label{prop:ufl}
    Let $\Z\subseteq\R^d$ be a Borel set. Let $u:\Z\to\R$ be an upper semi-analytic function bounded from below and $l:\Z\to\R$ a lower semi-analytic function bounded from above. If $u\preceq l$, there exists a Borel function $b:\Z\to\R$ such that $u\preceq b\preceq l$. 
\end{proposition}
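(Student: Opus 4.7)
The plan is to use Lusin's separation theorem (\Cref{thm:lusin}) to build a family of Borel sets that interpolate between the sublevel/superlevel sets of $u$ and $l$, and then reconstruct the desired function $b$ from this family. The preparatory \Cref{lem:usa} is not needed here; what matters is directly the observation that for each rational $q$, $\{u > q\}$ is analytic (since $u$ is upper semi-analytic) while $\{l > q\}$ is co-analytic (since $l$ is lower semi-analytic, the complement $\{l \le q\}$ is analytic).

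First, for every $q \in \mathbb{Q}$, the hypothesis $u \preceq l$ implies $\{u > q\} \subseteq \{l > q\}$. Since the left-hand side is analytic and the right-hand side is co-analytic, \Cref{thm:lusin} yields a Borel set $B_q$ with
\[
\{u > q\} \;\subseteq\; B_q \;\subseteq\; \{l > q\}.
\]
Next, to make the family $(B_q)$ monotone, set $\tilde B_q = \bigcap_{q' \in \mathbb{Q},\, q' \le q} B_{q'}$, which is Borel as a countable intersection. One checks that $\{u > q\} \subseteq \tilde B_q \subseteq \{l > q\}$ is preserved, and that $q \le q'$ implies $\tilde B_{q'} \subseteq \tilde B_q$.

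Finally, define
\[
b(x) \;=\; \sup\{q \in \mathbb{Q} : x \in \tilde B_q\}.
\]
Because $u$ is bounded from below by some $M \in \mathbb{R}$ and $l$ bounded from above by some $N \in \mathbb{R}$, one has $\tilde B_q = \Z$ for $q < M$ and $\tilde B_q = \varnothing$ for $q > N$, so $b$ takes values in $[M,N] \subseteq \mathbb{R}$. Borel measurability follows from
\[
\{b > r\} \;=\; \bigcup_{q \in \mathbb{Q},\, q > r} \tilde B_q,
\]
a countable union of Borel sets. For the sandwich inequality: if $q < u(x)$ then $x \in \{u > q\} \subseteq \tilde B_q$, so $b(x) \ge q$; taking the supremum over such $q$ gives $b(x) \ge u(x)$. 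Conversely, whenever $x \in \tilde B_q$ we have $x \in \{l > q\}$, hence $q < l(x)$, which yields $b(x) \le l(x)$.

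I do not expect any major obstacle: the only subtlety is arranging the separating Borel sets to be nested so that $b$ is well-defined and Borel, and this is handled by the countable intersection above. The argument is essentially the standard route for turning a level-set separation (as provided by Lusin's theorem) into a function-level separation.
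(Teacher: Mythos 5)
Your proof is correct, but it takes a genuinely different route from the paper's. The paper first proves the proposition for \emph{simple} (finitely-valued) $u$ and $l$ by separating the finitely many superlevel sets with Lusin's theorem, then invokes a preparatory approximation lemma (\Cref{lem:usa}) to write general $u$ and $l$ as monotone limits of simple semi-analytic functions, and finally takes $f=\limsup_n f_n$ of the resulting Borel separators. You instead separate the superlevel sets $\{u>q\}\subseteq\{l>q\}$ for every rational $q$ at once, monotonise the resulting Borel family by countable intersection, and recover $b$ as the supremum of the rationals whose set contains the point --- the classical ``Dedekind cut'' reconstruction of a function from a nested family of level sets. All the steps check out: the sets $\tilde B_q$ are Borel and nested, the bounds on $u$ and $l$ guarantee $\tilde B_q=\Z$ for $q$ small and $\tilde B_q=\varnothing$ for $q$ large (so $b$ is real-valued), the identity $\{b>r\}=\bigcup_{q>r}\tilde B_q$ gives Borel measurability, and the sandwich inequalities follow exactly as you argue. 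Your argument is more direct and self-contained, since it bypasses \Cref{lem:usa} and the limsup passage entirely; the paper's version is more modular in that the simple-function case isolates the single application of Lusin's theorem in a transparent finite setting before taking limits. Both ultimately rest on the same use of \Cref{thm:lusin} applied to individual superlevel sets.
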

\begin{proof}
    We start by considering the case of simple functions, namely we assume that there is a finite set $\Phi = \{\phi_1,\dots,\phi_N\}$ where $u$ and $l$ are valued. Without loss of generality we can assume that $\Phi$ is ordered increasingly (namely, $\phi_{i+1}>\phi_i$). For any $i$, we have that the set $U_i = \{u\geq \phi_i\}$ is analytic, while $L_i = \{l\geq\phi_i\}$ is co-analytic. The condition $u\preceq l$ implies that $U_i\subseteq L_i$. In particular, by Lusin's separation theorem (\Cref{thm:lusin}), there is a Borel set $B_i$ such that $U_i\subseteq B_i\subseteq L_i$. Let $D_N = B_N$. For $1\leq i<N$, define $D_i = B_{i}\setminus B_{i+1}$. Then, all these sets are Borel, and it is easy to check that the function $$f = \sum_{i=1}^N\phi_i \one{D_i}$$ is Borel and satisfies $u\preceq f\preceq l$. 

    Now that we have proved the desired claim for the case where $u$ and $l$ are simple, let us consider the generic case. Since $u$ is upper semi-analytic and bounded from below, by \Cref{lem:usa} there is a non-decreasing sequence $(u_n)_{n\geq 1}$, of simple upper semi-analytic functions, that converges point-wise to $u$. Similarly, there is a non-increasing sequence $(l_n)_{n\geq 1}$, of simple lower semi-analytic functions, that converges to $l$. For each $n$ we have $u_n\preceq u\preceq l\preceq l_n$, so there is a sequence $(f_n)_{n\geq 1}$ of Borel functions such that $u_n\preceq f_n\preceq l_n$ for all $n$. Let $f = \limsup_{n\to\infty} f_n$. $f$ is Borel and $u_n\preceq f\preceq l_n$ for all $n$. In particular, $u\preceq f\preceq l$, as desired.\qed 
\end{proof}

\end{document}